\newtheorem{thm}{Theorem}[section]
 \newtheorem{ex}{Example}[section]
 \newtheorem{cor}[thm]{Corollary}
 \newtheorem{lem}[thm]{Lemma}
 \newtheorem{prop}[thm]{Proposition}
 \theoremstyle{definition}
 \newtheorem{defn}{Definition}[section]
 \theoremstyle{remark}
 \newtheorem{rem}[thm]{Remark}
\begin{document}
	
	\title{Sufficient conditions for the unique solvability of absolute value matrix equations}
	
	%\author{Shubham Kumar$^{a,1}$, Deepmala$^{a,2}$\\
	%	\emph{\small $^{a}$Mathematics Discipline,}
	%	\emph{\small PDPM-IIITDM Jabalpur, M.P. India}\\
	%\emph{\small Jabalpur - 482005 (MP), India}\\
	%\emph{\small $^{b}$}\\
	%	\emph{\small $^1$Email: shub.srma@gmail.com, $^2$Email: dmrai23@gmail.com, }}
	
	\author{Shubham Kumar$^{1,\ast}$, Deepmala$^{2, \star}$ \\
		\emph{\small $^{1,2}$Mathematics Discipline,}\\	
		\emph{\small  PDPM-Indian Institute of Information Technology, Design and Manufacturing, Jabalpur, M.P., India}\\
		\emph{\small $^{\ast}$ Email: shub.srma@gmail.com, ORCID: https://orcid.org/0000-0001-5237-9719, }\\
		\emph{\small $^{\star}$Email: dmrai23@gmail.com, ORCID: https://orcid.org/0000-0002-2600-6836.}}
	\date{}
	\maketitle

% % % % % % % % % % % % % % % % % % % % % % % % % % % % % % % % % % % % % % % % % % % % %
% % % % % % % % % % % % % % % % % % % % % % % % % % % % % % % % % % % % % % % % % % % % %
% % % % % % % % % % % % % % % % % % % % % % % % % % % % % % % % % % % % % % % % % % % % %

\section{Abstract} 
	In this paper, we discussed the unique solvability of the two absolute value matrix equation. The unique solvability condition $\rho (\vert A^{-1} B \vert)<1$  is provided for the generalized absolute value matrix equation (GAVME) $AX + B \vert X \vert = F$. This condition is superior to that of Kumar et al. [J. Numer. Anal. Approx. Theory, 51(1) (2022) 83-87]. We also discussed different conditions for the unique solvability of the new generalized absolute value matrix equation (NGAVME) $AX+B\vert CX \vert=F$ with $A,B,C,F,X \in \mathcal{R}^{n \times n}$. We also provided the corrected version of Corollary 2.1 from the published work by Wang et al. [Appl. Math. Lett., 116 (2021) 106966].

\noindent \textbf{Keywords.}  Absolute value matrix equations. Unique solution. Sufficient condition.

\noindent \textbf{2020 MSC.}  15A06, 15A18, 90C30.
%65F10 --- Iterative numerical methods for linear systems.
%90C05 --- Linear programming.
%90C30 --- Nonlinear programming.
%15A18 --- Eigenvalues, singular values, and eigenvectors.
%15A06 --- Linear Equations.

% % % % % % % % % % % % % % % % % % % % % % % % % % % % % % % % % % % % % % % %

% % % % % % % % % % % % % % % % % % % % % % % % % % % % % % % % % % % % % % % %

% % % % % % % % % % % % % % % % % % % % % % % % % % % % % % % % % % % % % % % % % % % % %
% % % % % % % % % % % % % % % % % % % % % % % % % % % % % % % % % % % % % % % % % % % % %
% % % % % % % % % % % % % % % % % % % % % % % % % % % % % % % % % % % % % % % % % % % % %

% % % % % % % % % % % % % % % % % % % % % % % % % % % % % % % % % % % % % % % %

%\footnotetext[1]{Corresponding author.}

% % % % % % % % % % % % % % % % % % % % % % % % % % % % % % % % % % % % % % % %
\section{Introduction}
The NGAVME is defined as
\begin{equation}\label{Equ NGAVME}
	AX+B\vert CX \vert=F,
\end{equation}
%%%%%%%%%%%%%%%%%%%%%%%%%%%%%%%%%%%%%%%%%%%%%%%%%%%%%%%%%%%%%%%%%%%%%%%%%%%%%%%%%%%
 which is a generalization form of the following GAVME
	  \begin{equation} \label{Equ GAVME}
	AX + B\vert X \vert =F,
\end{equation}
with $A,B,C,F \in \mathcal{R}^{n\times n}$ are given and $X \in \mathcal{R}^{n\times n}$ is to be determined. For a matrix  $A \in \mathcal{R}^{n\times n}$ and a vector $x \in \mathcal{R}^{n}$, $\vert  A \vert$ and $\vert  x \vert$ denotes the component-wise absolute value of the matrix and the vector, respectively.

%%%%%%%%%%%%%%%%%%%%%%%%%%%%%%%%%%%%%%%%%%%%%%%%%%%%%%%%%%%%%%%%%%%%%%%%%%%%%%%%%%%

GAVME and NGAVME are the generalizations of the following generalized absolute value equation (GAVE)
	  \begin{equation} \label{Equ GAVE}
	Ax + B\vert x \vert =f,
\end{equation}
where $A,B \in \mathcal{R}^{n\times n}$, $f \in \mathcal{R}^{n}$ are given and $x \in \mathcal{R}^{n}$ is unknown.

The Sylvester-like absolute value equation is defined as
\begin{equation}\label{Equ Sylvester-like AVE}
	AXK+B\vert X \vert L=F,
\end{equation}
with the matrices $A,B,K,L,F \in \mathcal{R}^{n \times n}$ are known and $X \in \mathcal{R}^{n \times n}$ to be determined.

%%%%%%%%%%%%%%%%%%%%%%%%%%%%%%%%%%%%%%%%%%%%%%%%%%%%%%%%%%%%%%%%%%%%%%%%%%%%%%%%%%%%%%%%

Rohn \cite{rohn2004theorem} was the first to consider the GAVE (\ref{Equ GAVE}), then after Mangasarian et al. \cite{mangasarian2006absolute,mangasarian2007programming} studied about GAVE in detail. The absolute value equations are an active area of research because of their applications in different branches of mathematics, for instance, optimization problems like linear and quadratic programs, linear interval equations, and finding Nash-equilibrium points in the bi-matrix game. For more details and applications about different types of absolute value equations, readers may refer to (\cite{mezzadri2020solution,wu2021unique,wu2021uniquengave} and the references therein).

In the literature, Baksalary et al. \cite{baksalary1979matrix,baksalary1980matrix} consider the matrix equations $AX-BY=C$ and $AXB+CYD=E$ respectively, and established a necessary and sufficient condition for its solvability. Roth \cite{roth1952equations}, give the solvability conditions for the matrix equations $AX-YB=C$ and $AX-XB=C$. For the equation $AXB+CYD=E$, Peng et al. \cite{peng2006efficient} provided an iterative method for their solution. Dehghan and Hajarian \cite{dehghan2008iterative,dehghan2010general} discussed the numerical method for the general coupled matrix equations. The unique solvability conditions of the Sylvester-like AVE (\ref{Equ Sylvester-like AVE}) are discussed in (\cite{hashemi2021sufficient,wang2021new} and references therein). Li \cite{li2022sufficient} considered the new class of Sylvester-like AVE $AXK+\vert BXL \vert=F$, and sufficient conditions provided for their unique solvability. Dehghan et al. \cite{dehghan2020matrix} most recently expanded upon the idea of absolute value equations for matrix equations.

  Influenced by the work in \cite{zhou2021unique}, with Sylvester-like AVE, we further generalized the concept of GAVME and considered the NGAVME. These Matrix equations are observed in many branches of engineering, for example, in the field of interval matrix equations \cite{neumaier1990interval,seif1994interval}, robust control\cite{shashikhin2002robust} and so on. For the many forms of the matrix equations, several results of solvability and numerical methods found in (\cite{baksalary1979matrix,baksalary1980matrix,dehghan2008iterative,dehghan2010general,dehghan2020matrix,hashemi2021sufficient,peng2006efficient,roth1952equations,tang2022solvability,wang2021new,xie2021unique} and the references therein).

The appearance of the non-differentiable and non-linear terms in NGAVME (\ref{Equ NGAVME}), GAVME (\ref{Equ GAVME}) and Sylvester-like AVE (\ref{Equ Sylvester-like AVE}) make the study challenging and interesting. To the best of our knowledge, nobody has studied the NGAVME (\ref{Equ NGAVME}) yet. Based on this, we present sufficient conditions for unique solvability for the NGAVEM and the GAVME in this paper.

The structure of this work is as follows: in Sect.2, we recall some results and definitions. In Sect.3, we present some results for the unique solvability of the NGAVME (\ref{Equ NGAVME}) and the unique solution condition obtains for the GAVME (\ref{Equ GAVME}). Finally, we conclude our discussion in Sect.4.

%%%%%%%%%%%%%%%%%%%%%%%%%%%%%%%%%%%%%%%%%%%%%%%%%%%%%%%%%%%%%%%%%%%%%%%%%%%%%%%%%%%%%%%%
\section{Preliminaries} \label{section 2}
For later usage, we review several definitions and findings in this section. We use the vec operator and Kronecker product $`\otimes$' of the matrices; for the definition and properties of the vec operator and Kronecker product, one can see in \cite{hashemi2021sufficient,horn2012matrix,tang2022solvability}.

\begin{defn} \cite{sznajder1995generalizations}
	Let $\mathcal{M} =\{M_1,M_2\}$ denote the set of matrices with $M_1,M_2$ $\in$ $\mathcal{R}^{n \times n}$. A matrix R $\in$ $\mathcal{R}^{n \times n}$ is called a row representative (or column representative) of $\mathcal{M}$, if $R_{j.}$  $\in$ $\{(M_{1})_{j.},(M_{2})_{j.}\}$ (or $R_{.j}$  $\in$ $\{(M_{1})_{.j},(M_{2})_{.j}\}$) j=1,2,...,n, where $R_{j.}$, $(M_{1})_{j.}$, and $(M_{2})_{j.}$ (or $R_{.j}$, $(M_{1})_{.j}$, and $(M_{2})_{.j}$) denote the $j^{th}$ row (or column) of R, $M_{1}$ and $M_{2}$, respectively.
\end{defn} 

% % % % % % % % % % % % % % % % % % % % % % % % % % % % % % % % % % % % % % % % % % % % %
\begin{defn} \cite{sznajder1995generalizations}	
If the determinants of each row (or column) representative matrices of $\mathcal{M} = \{M_1,M_2\}$ are positive then $\mathcal{M}$ is said to be hold the row (or column) $\mathcal{W}$-property.

%	The set $\mathcal{M} = \{M_1,M_2\}$ is said to be hold the row (or column) $\mathcal{W}$-property if the determinants of all row (or column) representative matrices of $\mathcal{M}$ are positive.	
\end{defn}

% % % % % % % % % % % % % % % % % % % % % % % % % % % % % % % % % % % % % % % % % % % % %

\begin{thm}\label{thm 01} \cite{horn2012matrix}
	For non-negative matrices $A, B \in \mathcal{R}^{n \times n},$ if $0 \leq A \leq B$, then $\rho(A) \leq \rho(B),$ where $\rho(.)$ denote the spectral radius of the matrix. 
\end{thm}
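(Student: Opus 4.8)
The plan is to reduce the comparison of spectral radii to a comparison of matrix powers, exploiting that the entrywise order $0 \le A \le B$ is preserved under multiplication of non-negative matrices, and then to pass to the limit using Gelfand's spectral radius formula. The two standard ingredients I would invoke are: first, for any submultiplicative matrix norm $\|\cdot\|$, the identity $\rho(M) = \lim_{k\to\infty}\|M^k\|^{1/k}$; and second, the existence of a \emph{monotone} norm on non-negative matrices, i.e.\ a norm for which $0 \le P \le Q$ entrywise forces $\|P\| \le \|Q\|$.

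First I would prove by induction on $k$ that $0 \le A^k \le B^k$ for every $k \ge 1$. The base case $k=1$ is the hypothesis. For the inductive step, assuming $0 \le A^k \le B^k$, every entry of $A^{k+1}=A\,A^k$ is a sum of products of non-negative entries, so combining $A \le B$ with $A^k \le B^k$ gives $A^{k+1}=A\,A^k \le A\,B^k \le B\,B^k = B^{k+1}$ entrywise, while non-negativity of $A^{k+1}$ is immediate. The only fact used here is that multiplying an entrywise inequality between non-negative matrices by a further non-negative matrix, on either side, preserves the inequality.

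Next I would fix a monotone submultiplicative norm; for non-negative matrices the maximum row-sum norm $\|M\| = \max_i \sum_j M_{ij}$ serves, since it is an induced operator norm and $0 \le A^k \le B^k$ immediately yields $\|A^k\| \le \|B^k\|$. Taking $k$-th roots gives $\|A^k\|^{1/k} \le \|B^k\|^{1/k}$, and letting $k \to \infty$ in Gelfand's formula produces $\rho(A) \le \rho(B)$, which is the claim.

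As for where the difficulty lies, the inductive monotonicity of powers is entirely routine, so the real content sits in the two imported facts. Gelfand's formula is the genuinely nontrivial input, but it is completely standard and available in the cited reference \cite{horn2012matrix}, while the existence of a monotone submultiplicative norm is elementary. An alternative route avoiding Gelfand is to perturb $B$ into a strictly positive matrix $B_\varepsilon = B + \varepsilon J$, with $J$ the all-ones matrix, invoke the Perron--Frobenius theorem to obtain a positive eigenvector and monotonicity of the Perron root, and then send $\varepsilon \to 0$ using continuity of the spectral radius; I expect the norm-based argument to be shorter and cleaner, so that is the one I would present.
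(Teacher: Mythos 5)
Your argument is correct: the entrywise monotonicity of powers, the monotone submultiplicative norm, and Gelfand's formula fit together exactly as you describe. The paper itself offers no proof of this statement --- it is simply quoted from \cite{horn2012matrix} --- and your proof is essentially the standard one found in that reference, so there is nothing to reconcile.
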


% % % % % % % % % % % % % % % % % % % % % % % % % % % % % % % % % % % % % % % % % % % % %

\begin{thm}\label{thm 001} \cite{horn2012matrix}
For a matrix $A \in \mathcal{R}^{n \times n}$, there is a matrix norm $\vert \vert \vert . \vert \vert \vert $ such that $\rho(A)$ $\leq$ $ \vert \vert \vert A \vert \vert \vert $.
\end{thm}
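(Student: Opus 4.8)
The plan is to prove the stronger fact that $\rho(A)\le\vert\vert\vert A\vert\vert\vert$ for \emph{every} submultiplicative matrix norm $\vert\vert\vert\cdot\vert\vert\vert$; exhibiting any single such norm then immediately yields the existence claim. The engine of the argument is submultiplicativity, since that is the one defining property of a matrix norm that couples the purely spectral quantity $\rho(A)$ to the analytic quantity $\vert\vert\vert A\vert\vert\vert$. Homogeneity of the norm will supply the scalar bookkeeping.

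First I would select an eigenvalue $\lambda$ of $A$ with $|\lambda|=\rho(A)$ together with a nonzero eigenvector $v$, so that $Av=\lambda v$. A single eigen-equation lives at the level of vectors, whereas a matrix norm measures matrices, so the key maneuver is to promote $v$ to a nonzero matrix on which $A$ acts by the same scalar. I would take $V=v\mathbf{1}^{\mathsf T}\in\mathcal{R}^{n\times n}$, the matrix all of whose columns equal $v$, which is nonzero and satisfies $AV=A(v\mathbf{1}^{\mathsf T})=(\lambda v)\mathbf{1}^{\mathsf T}=\lambda V$.

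Applying the norm then finishes the estimate: homogeneity gives $\vert\vert\vert AV\vert\vert\vert=|\lambda|\,\vert\vert\vert V\vert\vert\vert$, while submultiplicativity gives $\vert\vert\vert AV\vert\vert\vert\le\vert\vert\vert A\vert\vert\vert\,\vert\vert\vert V\vert\vert\vert$. Combining the two and cancelling the positive factor $\vert\vert\vert V\vert\vert\vert$ yields $\rho(A)=|\lambda|\le\vert\vert\vert A\vert\vert\vert$, as required.

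The main subtlety, and the step I expect to need care, is that although $A$ is real its dominant eigenvalue $\lambda$ and eigenvector $v$ may be complex, so the construction does not literally remain inside $\mathcal{R}^{n\times n}$. I would resolve this in one of two ways: either interpret $\vert\vert\vert\cdot\vert\vert\vert$ on the complexification $\mathbb{C}^{n\times n}$, where the identical computation runs verbatim, or avoid eigenvectors altogether by invoking Gelfand's formula $\rho(A)=\lim_{k\to\infty}\vert\vert\vert A^{k}\vert\vert\vert^{1/k}$ together with the iterated submultiplicative bound $\vert\vert\vert A^{k}\vert\vert\vert\le\vert\vert\vert A\vert\vert\vert^{k}$, whence $\vert\vert\vert A^{k}\vert\vert\vert^{1/k}\le\vert\vert\vert A\vert\vert\vert$ for every $k$ and the limit preserves the inequality. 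The latter route is cleaner for a strictly real formulation but offloads the work onto Gelfand's theorem; for the existence statement exactly as phrased one may simply take the spectral norm $\vert\vert\vert A\vert\vert\vert=\sqrt{\rho(A^{\mathsf T}A)}$, for which $\rho(A)\le\vert\vert\vert A\vert\vert\vert$ is standard and direct.
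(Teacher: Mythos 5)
Your proposal is correct. Note that the paper offers no proof of this statement at all: it is quoted verbatim from Horn and Johnson as a known preliminary, so there is nothing in the paper to compare against. What you have written is essentially the standard textbook argument from that very reference, and in fact you prove the stronger universal form (the inequality $\rho(A)\le\vert\vert\vert A\vert\vert\vert$ for \emph{every} submultiplicative matrix norm), from which the existence statement in the paper follows by exhibiting any one norm. The eigenvector-promotion step $V=v\mathbf{1}^{\mathsf T}$, the use of homogeneity and submultiplicativity, and the cancellation of $\vert\vert\vert V\vert\vert\vert>0$ are all sound, and you correctly flag the only real subtlety, namely that $\lambda$ and $v$ may be complex, resolving it by passing to the complexification or by falling back on the spectral norm $\sigma_{\max}(A)$, for which the bound is immediate.

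One small caution on your proposed alternative route: deriving $\rho(A)\le\vert\vert\vert A\vert\vert\vert$ from Gelfand's formula is delicate, because the easy direction of Gelfand's formula, $\rho(A)=\rho(A^{k})^{1/k}\le\vert\vert\vert A^{k}\vert\vert\vert^{1/k}$, is in most developments itself established using exactly the lemma you are trying to prove, so that route risks circularity unless Gelfand's formula has been proved independently. Since this is only an aside and your primary argument is self-contained, it does not affect the correctness of the proof.
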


% % % % % % % % % % % % % % % % % % % % % % % % % % % % % % % % % % % % % % % % % % % % %
\begin{thm}\label{thm 1} \cite{dehghan2020matrix,kumar2022note,kumar2023note,xie2021unique}
	If any of the following requirements are fulfilled, the GAVME (\ref{Equ GAVME}) has exactly one solution:\\
	(i)  $\sigma_{max}(\vert B \vert) < \sigma_{min}(A);$\\
	(ii)  $\sigma_{max}(B) < \sigma_{min}(A),$
	 where $\sigma_{max}(.)$ and $\sigma_{min}(.)$ denotes the maximum and minimum singular values of any real entries square matrix, respectively; \\
	 (iii) $\rho (\vert A^{-1} \vert . \vert B \vert)<1$; \\
	 (iv) $\sigma_{max}(A^{-1}B) < 1,$ \\ where A is an invertible matrix.
\end{thm}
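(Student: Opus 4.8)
The plan is to collapse the matrix equation \eqref{Equ GAVME} onto the vector equation \eqref{Equ GAVE} and then transport each hypothesis through the Kronecker product, so that all four statements follow from the unique-solvability theory for the GAVE \eqref{Equ GAVE}. Applying the vec operator to \eqref{Equ GAVME} and writing $x=\mathrm{vec}(X)$, $f=\mathrm{vec}(F)$, $\mathcal{A}=I_n\otimes A$, $\mathcal{B}=I_n\otimes B$, the identity $\mathrm{vec}(AX)=(I_n\otimes A)\,\mathrm{vec}(X)$ together with the entrywise fact $\mathrm{vec}(|X|)=|\mathrm{vec}(X)|$ turns \eqref{Equ GAVME} into
\[
\mathcal{A}\,x+\mathcal{B}\,|x|=f,\qquad x\in\mathcal{R}^{\,n^{2}}.
\]
Since vec is a bijection, \eqref{Equ GAVME} has a unique solution $X$ for every $F$ if and only if this $n^{2}$-dimensional GAVE has a unique solution for every $f$; the whole theorem then reduces to checking that hypotheses (i)--(iv) pass to the pair $(\mathcal{A},\mathcal{B})$.

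For this transfer I would isolate three elementary Kronecker identities. The singular values of $I_n\otimes A$ are exactly those of $A$, each with multiplicity $n$, so $\sigma_{\max}(\mathcal{A})=\sigma_{\max}(A)$ and $\sigma_{\min}(\mathcal{A})=\sigma_{\min}(A)$, and likewise $\sigma_{\max}(\mathcal{B})=\sigma_{\max}(B)$. The modulus distributes, $|I_n\otimes B|=I_n\otimes|B|$, whence $\sigma_{\max}(|\mathcal{B}|)=\sigma_{\max}(|B|)$. Finally $\mathcal{A}^{-1}=I_n\otimes A^{-1}$ and $(I_n\otimes P)(I_n\otimes Q)=I_n\otimes(PQ)$, so $\sigma_{\max}(\mathcal{A}^{-1}\mathcal{B})=\sigma_{\max}(A^{-1}B)$ and, using $\rho(I_n\otimes M)=\rho(M)$,
\[
\rho\!\left(|\mathcal{A}^{-1}|\,|\mathcal{B}|\right)=\rho\!\left(I_n\otimes\bigl(|A^{-1}|\,|B|\bigr)\right)=\rho\!\left(|A^{-1}|\,|B|\right).
\]
Thus each of (i)--(iv) for $(A,B)$ is equivalent to the same condition for $(\mathcal{A},\mathcal{B})$.

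It then remains to prove unique solvability of the reduced GAVE under each hypothesis, which I would organize around the fixed-point map $T(x)=\mathcal{A}^{-1}f-\mathcal{A}^{-1}\mathcal{B}\,|x|$, whose fixed points are exactly the solutions. Using the entrywise inequality $\bigl||x|-|y|\bigr|\le|x-y|$, one gets $\|T(x)-T(y)\|_{2}\le\sigma_{\max}(\mathcal{A}^{-1}\mathcal{B})\,\|x-y\|_{2}$, so under (iv) the map $T$ is a Euclidean contraction and the Banach fixed-point theorem gives a unique solution. I would then dispose of (i) and (ii) by the implications (i)$\Rightarrow$(ii)$\Rightarrow$(iv): indeed $\sigma_{\max}(B)\le\sigma_{\max}(|B|)$ (because $|Bx|\le|B|\,|x|$ entrywise) gives (i)$\Rightarrow$(ii), while $\sigma_{\max}(A^{-1}B)\le\sigma_{\max}(A^{-1})\,\sigma_{\max}(B)=\sigma_{\max}(B)/\sigma_{\min}(A)<1$ gives (ii)$\Rightarrow$(iv). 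Case (iii) is handled separately: from $|T(x)-T(y)|\le|\mathcal{A}^{-1}\mathcal{B}|\,|x-y|\le P\,|x-y|$ with $P=I_n\otimes|A^{-1}|\,|B|$ and $\rho(P)<1$, the successive approximations $x^{(k+1)}=T(x^{(k)})$ satisfy $|x^{(k+1)}-x^{(k)}|\le P^{\,k}\,|x^{(1)}-x^{(0)}|$; since $\rho(P)<1$ the Neumann series $\sum_{k}P^{k}$ converges, forcing $\{x^{(k)}\}$ to be Cauchy (existence) and, via $(I-P)^{-1}\ge 0$, forcing $|x-y|\le 0$ for any two solutions (uniqueness).

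I expect the only genuinely delicate point to be case (iii), where the contraction is unavailable in the Euclidean norm and one must exploit the nonnegativity of $P$. The clean route is the monotone successive-approximation argument above; equivalently one may invoke Theorem \ref{thm 001} to obtain a monotone norm in which $P$ has norm strictly below $1$, but then care is needed to ensure the entrywise inequality $|T(x)-T(y)|\le P\,|x-y|$ upgrades to that norm. The supporting facts -- the Kronecker identities, the singular-value inequalities, and the entrywise comparison $|\mathcal{A}^{-1}\mathcal{B}|\le|\mathcal{A}^{-1}|\,|\mathcal{B}|$ (which, with Theorem \ref{thm 01}, yields $\rho(|\mathcal{A}^{-1}\mathcal{B}|)\le\rho(|\mathcal{A}^{-1}|\,|\mathcal{B}|)$) -- are all routine.
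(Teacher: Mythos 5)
Your proof is correct, but it is worth noting that the paper does not prove this statement at all: Theorem \ref{thm 1} is quoted as a known preliminary from the cited references, so there is no in-paper argument to match yours against. Your route is a legitimate, self-contained reconstruction. The reduction you use (vec plus $I_n\otimes A$, $I_n\otimes B$) is the same device the paper uses for Theorems \ref{thm 2}--\ref{thm 5}, though for this particular theorem the lighter reduction used in the paper's proof of Theorem \ref{Main Thoerem} --- reading \eqref{Equ GAVME} column by column as $n$ independent copies of $Ax_j+B|x_j|=f_j$ --- does the same job without any Kronecker identities, since your $n^2\times n^2$ system is just the block-diagonal assembly of those $n$ identical blocks and all the spectral quantities are unchanged block by block. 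The substantive content of your write-up is the part the paper outsources to the references: the implication chain (i)$\Rightarrow$(ii)$\Rightarrow$(iv) via $\sigma_{\max}(B)\le\sigma_{\max}(|B|)$ and $\sigma_{\max}(A^{-1}B)\le\sigma_{\max}(B)/\sigma_{\min}(A)$, the Euclidean contraction for (iv) using $\bigl||x|-|y|\bigr|\le|x-y|$, and the separate monotone successive-approximation argument for (iii) with $(I-P)^{-1}=\sum_k P^k\ge 0$ forcing $|x-y|\le 0$. All of these steps check out (note that under (i) or (ii) the hypothesis $\sigma_{\min}(A)>\sigma_{\max}(|B|)\ge 0$ already forces $A$ to be invertible, so your use of $A^{-1}$ there is legitimate). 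What your approach buys is a proof of the quoted result from first principles; what the paper's citation-plus-column-splitting buys is brevity, at the cost of leaving cases (i)--(iv) unverified within the paper itself.
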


% % % % % % % % % % % % % % % % % % % % % % % % % % % % % % % % % % % % % % % % % % % % %

\begin{thm}\label{thm 2} \cite{tang2022solvability}
     The GAVME (\ref{Equ GAVME}) has exactly one solution if any of the following cases happen:\\
	(i) the column  $\mathcal{W}$-property is hold by $\{Q+P,-Q+P \};$ \\
	(ii)  $Q+P$ is nonsingular and the column  $\mathcal{W}$-property is hold by $\{I,(Q+P)^{-1}(-Q+P)\}$ with $n^{2}-$ order identity matrix I; \\
	(iii) $Q+P$ is nonsingular and matrix $(Q+P)^{-1}(-Q+P)$ is a P-matrix;\\
	(iv)  $\{(Q+P)F_1+(-Q+P)F_2\}$ is invertible, where $F_1,F_2$ $\in$ $\mathcal{R}^{n^{2} \times n^{2}}$ are two any non-negative diagonal matrices with $diag(F_1+F_2) > 0,$ with $Q=I \otimes B$ and $P=I \otimes A$.
\end{thm}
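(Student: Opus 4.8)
The plan is to vectorise (\ref{Equ GAVME}) and recognise all four items as one coherent-orientation condition on an associated piecewise-linear map. Applying the $\mathrm{vec}$ operator and using $\mathrm{vec}(AX)=(I\otimes A)\mathrm{vec}(X)$ together with the fact that the entrywise absolute value commutes with $\mathrm{vec}$, so that $\mathrm{vec}(B|X|)=(I\otimes B)\,|\mathrm{vec}(X)|$, equation (\ref{Equ GAVME}) becomes the generalized absolute value equation $Px+Q|x|=f$ with $x=\mathrm{vec}(X)$, $f=\mathrm{vec}(F)$, $P=I\otimes A$ and $Q=I\otimes B$. Because $\mathrm{vec}$ is a linear bijection of $\mathcal{R}^{n\times n}$ onto $\mathcal{R}^{n^{2}}$, (\ref{Equ GAVME}) has a unique solution $X$ exactly when this vector equation has a unique $x$; so it suffices to decide when $H(x):=Px+Q|x|$ is a bijection of $\mathcal{R}^{n^{2}}$.

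Second, I would analyse $H$ orthant by orthant. On the closed orthant fixed by a sign vector $s\in\{-1,1\}^{n^{2}}$ we have $|x|=D_s x$ with $D_s=\mathrm{diag}(s)$, so there $H$ is linear with matrix $P+QD_s$. Splitting $D_s=F_1-F_2$ into its $0/1$ diagonal projectors onto the positive and negative coordinates (so $F_1+F_2=I$) and using $P=P(F_1+F_2)$ gives $P+QD_s=(Q+P)F_1+(-Q+P)F_2$, i.e. a column representative of $\{Q+P,-Q+P\}$. The crux is the coherent-orientation principle for such continuous piecewise-linear maps: $H$ is a bijection once all of these $2^{n^{2}}$ selection matrices are nonsingular with determinants of a common sign, and requiring them all positive is exactly item (i). Establishing this bijectivity-from-coherent-orientation step is the main obstacle, since it is what turns a purely linear-algebraic sign condition into global invertibility; everything else is bookkeeping.

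Finally, I would show that (ii)--(iv) are restatements of (i). For (i)$\Leftrightarrow$(ii), when $Q+P$ is nonsingular, left multiplication by $(Q+P)^{-1}$ sends each column representative $R$ of $\{Q+P,-Q+P\}$ to the column representative $(Q+P)^{-1}R$ of $\{I,(Q+P)^{-1}(-Q+P)\}$ and scales its determinant by the fixed positive factor $\det(Q+P)^{-1}$, so the two pairs enjoy the column $\mathcal{W}$-property together. For (ii)$\Leftrightarrow$(iii), I would use the classical fact that $\{I,M\}$ has the column $\mathcal{W}$-property iff $M$ is a P-matrix: a representative of $\{I,M\}$ taking its columns from $M$ precisely on an index set $\alpha$ has determinant $\det M[\alpha,\alpha]$, so positivity of all such determinants is positivity of all principal minors. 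For (i)$\Leftrightarrow$(iv), multilinearity of the determinant in its columns yields
\[
\det\big((Q+P)F_1+(-Q+P)F_2\big)=\sum_{\alpha}\Big(\prod_{j\in\alpha}(F_1)_{jj}\prod_{j\notin\alpha}(F_2)_{jj}\Big)\det R_\alpha ,
\]
where $R_\alpha$ ranges over the column representatives of $\{Q+P,-Q+P\}$; under (i) each $\det R_\alpha>0$, and since $\mathrm{diag}(F_1+F_2)>0$ the term with $\alpha=\{j:(F_1)_{jj}>0\}$ has strictly positive coefficient, forcing the sum to be positive and the matrix invertible, whereas a sign discrepancy among the $\det R_\alpha$ would, by continuity in the entries of $F_1,F_2$, produce a singular matrix of the stated form. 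This closes the chain of equivalences and, with the first two paragraphs, gives the theorem.
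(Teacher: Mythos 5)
The paper offers no proof to compare against: Theorem \ref{thm 2} is imported verbatim from \cite{tang2022solvability} and stated without argument, so your attempt can only be judged on its own terms. Much of it holds up. The vectorisation step is correct ($\mathrm{vec}(AX)=(I\otimes A)\mathrm{vec}(X)$ and $\mathrm{vec}(B|X|)=(I\otimes B)|\mathrm{vec}(X)|$ do turn (\ref{Equ GAVME}) into $Px+Q|x|=f$ with $P=I\otimes A$, $Q=I\otimes B$); the orthant-by-orthant selection matrices $P+QD_s=(Q+P)F_1+(-Q+P)F_2$ are indeed exactly the column representatives of $\{Q+P,-Q+P\}$; and your links among (i)--(iv) are sound, including the multilinear determinant expansion for (iv) (where one should add that a function affine in each $d_i$ attains its extrema on the cube $[-1,1]^{n^2}$ at its vertices, so positivity at the representatives propagates to the whole box).

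The genuine gap is the step you yourself label the crux. "A continuous piecewise-linear map is a bijection once all its selection matrices are nonsingular with a common determinant sign" is not a free-standing principle: for general piecewise-affine maps, coherent orientation makes the map a surjective local homeomorphism (a covering of some degree) but does \emph{not} imply injectivity, and counterexamples are known. What rescues the claim here is the special structure of $H(x)=Px+Q|x|$ -- it is positively homogeneous and its pieces are the orthants of a conical subdivision -- and proving bijectivity in exactly that setting \emph{is} the content of the theorem, not bookkeeping around it. The standard way to close this, and the one the paper's own Definitions 2.1--2.2 (quoted from \cite{sznajder1995generalizations}) are set up for, is to write $x=x^{+}-x^{-}$, $|x|=x^{+}+x^{-}$ and recast $Px+Q|x|=f$ as the horizontal linear complementarity problem $(Q+P)x^{+}=(-Q+P)x^{-}+f$, $x^{\pm}\geq 0$, $(x^{+})^{T}x^{-}=0$, then invoke the Sznajder--Gowda theorem that the column $\mathcal{W}$-property of the pair $\{Q+P,-Q+P\}$ characterises unique solvability of this problem for every right-hand side. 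Without that input (or an equivalent degree-theoretic argument for positively homogeneous piecewise-linear maps), your outline asserts the theorem rather than proving it.
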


% % % % % % % % % % % % % % % % % % % % % % % % % % % % % % % % % % % % % % % % % % % % %

\begin{thm} \label{thm 4} \cite{tang2022solvability}
		In the case that each diagonal entry of $P+Q$ has the same sign as its corresponding entry of $P-Q$ then GAVME (\ref{Equ GAVME}) has exactly one solution if any of the following statements is true: \\
	(i)	$Q+P$ and $-Q+P$ are strictly diagonally dominant by columns;  \\
	(ii) $Q+P$, $-Q+P$ and all their column representative matrices are irreducibly diagonally dominant by columns, where  $Q=I \otimes B$ and $P=I \otimes A$.
\end{thm}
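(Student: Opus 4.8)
The plan is to pass from the matrix equation to an equivalent vector absolute value equation and then to verify the nonsingularity conditions recorded in Theorem \ref{thm 2}. Applying the vec operator to \eqref{Equ GAVME} and using $\mathrm{vec}(AX)=(I\otimes A)\,\mathrm{vec}(X)$ together with $\mathrm{vec}(B|X|)=(I\otimes B)\,|\mathrm{vec}(X)|$, I would rewrite the GAVME as the GAVE $Px+Q|x|=f$, where $x=\mathrm{vec}(X)$, $f=\mathrm{vec}(F)$, $P=I\otimes A$, and $Q=I\otimes B$. By Theorem \ref{thm 2} it then suffices to control the family of matrices $M(F_1,F_2)=(Q+P)F_1+(-Q+P)F_2$ built from nonnegative diagonal matrices $F_1,F_2$ with $\mathrm{diag}(F_1+F_2)>0$; the column representatives of $\{Q+P,-Q+P\}$ are exactly the instances where each pair of diagonal weights is $(1,0)$ or $(0,1)$. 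Writing $(F_1)_{jj}=\alpha_j\ge 0$ and $(F_2)_{jj}=\beta_j\ge 0$ with $\alpha_j+\beta_j>0$, the $j$-th column of $M(F_1,F_2)$ is $\alpha_j (P+Q)_{\cdot j}+\beta_j (P-Q)_{\cdot j}$, so its diagonal entry is $\alpha_j (P+Q)_{jj}+\beta_j (P-Q)_{jj}$.

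The crucial use of the standing hypothesis would occur on the diagonal: since $(P+Q)_{jj}$ and $(P-Q)_{jj}$ carry a common sign $s_j$ and $\alpha_j,\beta_j\ge 0$ are not both zero, no cancellation is possible and
\[
|M(F_1,F_2)_{jj}|=\alpha_j\,|(P+Q)_{jj}|+\beta_j\,|(P-Q)_{jj}|.
\]
For case (i) I would bound the off-diagonal moduli by $|M_{ij}|\le \alpha_j|(P+Q)_{ij}|+\beta_j|(P-Q)_{ij}|$ and invoke the strict column diagonal dominance of both $P+Q$ and $-Q+P$ to get $\sum_{i\ne j}|M_{ij}|<|M_{jj}|$ for every $j$. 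Thus every $M(F_1,F_2)$ is strictly diagonally dominant by columns, hence nonsingular, which is precisely condition (iv) of Theorem \ref{thm 2}; this settles case (i) without any reasoning about determinant signs.

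For case (ii) the strict inequalities become weak ones, so the full weighted family need not stay dominant and condition (iv) is no longer directly available; I would argue instead with the column representatives, each of which is assumed to be irreducibly diagonally dominant by columns and is therefore nonsingular. To match these with Theorem \ref{thm 2}(i) I would fix the sign of each determinant by the homotopy $R(t)=(1-t)\,\mathrm{diag}(R)+tR$, $t\in[0,1]$: this deformation leaves the diagonal fixed, scales the off-diagonal entries by $t$, and hence preserves the off-diagonal zero pattern (keeping irreducibility for $t>0$) as well as the weak column dominance with the required strict inequalities. Consequently $R(t)$ stays irreducibly diagonally dominant, $\det R(t)\ne 0$ for all $t$, and $\mathrm{sgn}(\det R)=\prod_j \mathrm{sgn}(R_{jj})=\prod_j s_j$. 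Since the sign hypothesis makes this product the same for every column representative, all representative determinants share one sign, which yields the column $\mathcal{W}$-property and, through Theorem \ref{thm 2}(i), the unique solution.

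I expect the real difficulty to lie not in invertibility but in the sign bookkeeping imposed by the $\mathcal{W}$-property: the hypothesis that $(P+Q)_{jj}$ and $(P-Q)_{jj}$ agree in sign is exactly what forbids diagonal cancellation in case (i) and what makes the determinant sign $\prod_j s_j$ independent of the column choices in case (ii). The secondary obstacle is case (ii) itself, where only weak dominance is at hand and one must lean on irreducibility—and on its survival under the homotopy—to guarantee that the determinant does not vanish at any intermediate parameter value.
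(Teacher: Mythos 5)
The paper itself contains no proof of this statement: Theorem \ref{thm 4} is quoted from \cite{tang2022solvability} and used as a black box, so there is no in-paper argument to compare against. Your reconstruction takes the route that the surrounding text (and the cited source) is clearly built on --- vectorize $AX+B|X|=F$ into the GAVE $P\,\mathrm{vec}(X)+Q\,|\mathrm{vec}(X)|=\mathrm{vec}(F)$ with $P=I\otimes A$, $Q=I\otimes B$, and then feed the hypotheses into the $\mathcal{W}$-property machinery of Theorem \ref{thm 2} --- and both halves are sound. In case (i), the identity $|M(F_1,F_2)_{jj}|=\alpha_j|(P+Q)_{jj}|+\beta_j|(P-Q)_{jj}|$ (which is exactly where the diagonal sign hypothesis is consumed), together with the strict column dominance of $P+Q$ and $P-Q$ and the fact that $\alpha_j+\beta_j>0$ forces at least one of the two column bounds to be strict, does show that every admissible $(Q+P)F_1+(-Q+P)F_2$ is strictly column diagonally dominant and hence nonsingular; this is precisely Theorem \ref{thm 2}(iv). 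In case (ii), the homotopy $R(t)=(1-t)\,\mathrm{diag}(R)+tR$ is legitimate: the diagonal of an irreducibly column-dominant matrix cannot vanish (a zero diagonal entry would force a zero column), so $R(t)$ is strictly column dominant for $t\in(0,1)$, nonsingular at $t=1$ by Taussky's theorem, and therefore $\mathrm{sgn}(\det R)=\prod_j\mathrm{sgn}(R_{jj})=\prod_j s_j$ for every column representative, a quantity independent of the representative chosen.

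The one step you should tighten is the very last inference in case (ii). Definition 2 of this paper requires the representative determinants to be \emph{positive}, while your argument only delivers that they all carry the common sign $\prod_j s_j$, which may be $-1$ (already for $n=1$, $A=-1$, $B=0$ the unique-solvability conclusion holds while every representative determinant is negative). This is not a flaw in your reasoning but a mismatch with the paper's loosely stated definition: the actual Sznajder--Gowda characterization of the column $\mathcal{W}$-property in \cite{sznajder1995generalizations} is that all column representative determinants are nonzero and of one common sign, and under that (correct) formulation your homotopy argument closes case (ii) immediately. Add one sentence invoking that characterization (or otherwise normalizing the overall sign) and the proof is complete.
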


% % % % % % % % % % % % % % % % % % % % % % % % % % % % % % % % % % % % % % % % % % % % %

\begin{thm} \label{thm 5} \cite{tang2022solvability}
	If non-singular matrix A satisfies the condition
	$\rho((I \otimes A^{-1}B)D) < 1$ for any diagonal matrix $D=diag(d_{i}) \in \mathcal{R}^{n^{2} \times n^{2}}$ with $d_{i} \in [-1,1],$ then the GAVME (\ref{Equ GAVME}) has exactly one solution.
\end{thm}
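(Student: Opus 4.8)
The plan is to reduce the matrix equation to an equivalent vector equation and then verify the structural hypothesis already recorded in Theorem \ref{thm 2}. Applying the $\mathrm{vec}$ operator to (\ref{Equ GAVME}) and using $\mathrm{vec}(AX)=(I\otimes A)\mathrm{vec}(X)$ together with $\mathrm{vec}(B|X|)=(I\otimes B)|\mathrm{vec}(X)|$, the GAVME turns into the GAVE $Px+Q|x|=f$, where $x=\mathrm{vec}(X)$, $f=\mathrm{vec}(F)$, $P=I\otimes A$ and $Q=I\otimes B$ (matching the notation of Theorem \ref{thm 2}). Since $A$ is nonsingular, $P=I\otimes A$ is nonsingular with $P^{-1}Q=(I\otimes A^{-1})(I\otimes B)=I\otimes A^{-1}B$, so the hypothesis reads exactly $\rho(P^{-1}QD)<1$ for every diagonal $D=\mathrm{diag}(d_i)\in\mathcal{R}^{n^2\times n^2}$ with $d_i\in[-1,1]$.

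The key step I would isolate is that this hypothesis forces every ``branch matrix'' $P+QD$ (with $d_i\in[-1,1]$) to be nonsingular. Writing $P+QD=P(I+P^{-1}QD)$, it suffices to show $I+P^{-1}QD$ is nonsingular. Because $\rho(P^{-1}QD)<1$, no eigenvalue $\lambda$ of $P^{-1}QD$ equals $-1$, so $\det(I+P^{-1}QD)=\prod_i(1+\lambda_i)\neq 0$; combined with $\det P\neq 0$ this gives $\det(P+QD)\neq 0$ for all such $D$.

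Next I would match this directly to Theorem \ref{thm 2}(iv). Given nonnegative diagonal $F_1,F_2$ with $\mathrm{diag}(F_1+F_2)>0$, set $S=F_1+F_2$ (nonsingular, positive diagonal) and $D=(F_1-F_2)S^{-1}$, whose entries $(f_1-f_2)/(f_1+f_2)$ lie in $[-1,1]$. A one-line computation gives $(Q+P)F_1+(-Q+P)F_2=P(F_1+F_2)+Q(F_1-F_2)=(P+QD)S$, which is a product of nonsingular matrices by the previous paragraph. Hence the hypothesis of Theorem \ref{thm 2}(iv) holds and the GAVME (\ref{Equ GAVME}) has a unique solution.

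The main obstacle is conceptual rather than computational: the spectral-radius condition only yields nonsingularity of the individual branch matrices, whereas the genuine content is passing from ``all branches nonsingular'' to ``the piecewise-linear map $x\mapsto Px+Q|x|$ is a bijection.'' I offload that step onto Theorem \ref{thm 2}(iv); the subtle point is that the full range $d_i\in[-1,1]$ (not merely $d_i\in\{-1,1\}$) is precisely what (iv) requires, so the entire strength of the hypothesis is used. If a self-contained argument were preferred, uniqueness would follow from the same nonsingularity fact (from $Px+Q|x|=Py+Q|y|$ one gets $(P+QD)(x-y)=0$ with $d_i=(|x_i|-|y_i|)/(x_i-y_i)\in[-1,1]$, forcing $x=y$), and existence from coercivity, since $\|Px+Q|x|\|\ge\mu\,\|x\|$ with $\mu$ the minimum of $\sigma_{min}(P+QD)>0$ over the finitely many sign matrices $D\in\{-1,1\}^{n^2}$, together with invariance of domain.
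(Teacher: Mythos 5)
The paper does not actually prove this statement: Theorem \ref{thm 5} is imported verbatim from Tang et al.\ \cite{tang2022solvability} as a preliminary, with no argument given, so there is no in-paper proof to compare against. Judged on its own, your proposal is correct. The vectorization step is right ($\mathrm{vec}(AX)=(I\otimes A)\mathrm{vec}(X)$, $\mathrm{vec}(B|X|)=(I\otimes B)|\mathrm{vec}(X)|$, and $(I\otimes A)^{-1}(I\otimes B)=I\otimes A^{-1}B$), the deduction that $\rho(P^{-1}QD)<1$ rules out the eigenvalue $-1$ and hence makes $P+QD$ nonsingular is sound, and the substitution $S=F_1+F_2$, $D=(F_1-F_2)S^{-1}$ with entries in $[-1,1]$ correctly exhibits $(Q+P)F_1+(-Q+P)F_2=(P+QD)S$ as a product of nonsingular matrices, so Theorem \ref{thm 2}(iv) applies. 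This makes Theorem \ref{thm 5} a genuine corollary of Theorem \ref{thm 2}(iv) rather than an independent fact, which is a tidy observation the paper does not make; the only caveat is that both results come from the same source, so the reduction is only as non-circular as the original reference's internal logic (your closing self-contained sketch --- injectivity via $(P+QD)(x-y)=0$ with $d_i=(|x_i|-|y_i|)/(x_i-y_i)\in[-1,1]$, plus norm-coercivity over the finitely many sign matrices and invariance of domain for surjectivity --- removes even that dependence and is essentially the standard argument).
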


% % % % % % % % % % % % % % % % % % % % % % % % % % % % % % % % % % % % % % % % % % % % %

\begin{thm} \label{thm 7} \cite{wang2021new}
The Sylvester-like AVE (\ref{Equ Sylvester-like AVE}) has exactly one solution if 
\begin{equation}
	\sigma_{max}(LK^{-1})\sigma_{max}(A^{-1}B)<1, 
\end{equation}
where A and K are non-singular matrices.
\end{thm}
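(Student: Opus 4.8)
The plan is to reduce the Sylvester-like AVE \eqref{Equ Sylvester-like AVE} to a GAVE in vectorized form and then invoke the singular-value criterion of Theorem \ref{thm 1}(iv). First I would apply the vec operator to both sides of $AXK + B\vert X\vert L = F$. Using the standard identity $\mathrm{vec}(MXN) = (N^{T} \otimes M)\,\mathrm{vec}(X)$ together with the componentwise relation $\mathrm{vec}(\vert X\vert) = \vert\mathrm{vec}(X)\vert$, the equation becomes
$$(K^{T} \otimes A)\,\mathrm{vec}(X) + (L^{T} \otimes B)\,\vert\mathrm{vec}(X)\vert = \mathrm{vec}(F).$$
Writing $x = \mathrm{vec}(X)$, $f = \mathrm{vec}(F)$, $\tilde{A} = K^{T} \otimes A$, and $\tilde{B} = L^{T} \otimes B$, this is precisely a GAVE $\tilde{A}x + \tilde{B}\vert x\vert = f$ of order $n^{2}$, so solving the matrix equation is equivalent to solving this vector equation.

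Next I would check the hypotheses of Theorem \ref{thm 1}(iv) for the pair $\tilde{A},\tilde{B}$. Since $A$ and $K$ are non-singular, their Kronecker product $\tilde{A} = K^{T} \otimes A$ is non-singular, with inverse $\tilde{A}^{-1} = (K^{-1})^{T} \otimes A^{-1}$. Multiplying by $\tilde{B}$ and applying the mixed-product rule $(M_{1}\otimes N_{1})(M_{2}\otimes N_{2}) = (M_{1}M_{2})\otimes(N_{1}N_{2})$ gives
$$\tilde{A}^{-1}\tilde{B} = \bigl((K^{-1})^{T} \otimes A^{-1}\bigr)\bigl(L^{T} \otimes B\bigr) = (LK^{-1})^{T} \otimes (A^{-1}B),$$
where I have used $(K^{-1})^{T}L^{T} = (LK^{-1})^{T}$.

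The key remaining step is to evaluate $\sigma_{max}(\tilde{A}^{-1}\tilde{B})$. Here I would use the fact that the singular values of a Kronecker product $M \otimes N$ are exactly the products $\sigma_{i}(M)\sigma_{j}(N)$, so that $\sigma_{max}(M \otimes N) = \sigma_{max}(M)\,\sigma_{max}(N)$, together with the fact that transposition preserves singular values, $\sigma_{max}(P^{T}) = \sigma_{max}(P)$. Applying both to the expression above yields
$$\sigma_{max}(\tilde{A}^{-1}\tilde{B}) = \sigma_{max}\bigl((LK^{-1})^{T}\bigr)\,\sigma_{max}(A^{-1}B) = \sigma_{max}(LK^{-1})\,\sigma_{max}(A^{-1}B),$$
which is strictly less than $1$ by hypothesis. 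Theorem \ref{thm 1}(iv) then guarantees that the vectorized GAVE has a unique solution, and unvectorizing recovers the unique solution $X$ of the Sylvester-like AVE. I expect the main obstacle to be the Kronecker bookkeeping—in particular confirming that the mixed-product rule and the transpose identity combine to give exactly $(LK^{-1})^{T}$ rather than a reordered or differently transposed factor—and verifying the singular-value multiplicativity carefully, since an error there would spoil the clean product form of the final bound.
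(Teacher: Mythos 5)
Your argument is correct. Note that the paper itself offers no proof of this statement---it is quoted from Wang et al.\ \cite{wang2021new} as a known result---and your route (vectorizing via $\mathrm{vec}(MXN)=(N^{T}\otimes M)\,\mathrm{vec}(X)$, computing $\tilde{A}^{-1}\tilde{B}=(LK^{-1})^{T}\otimes(A^{-1}B)$ by the mixed-product rule, and using multiplicativity of the largest singular value over Kronecker products) is precisely the standard derivation behind it. The only cosmetic slip is the appeal to Theorem \ref{thm 1}(iv), which is stated for the matrix equation GAVME rather than for a vector GAVE of order $n^{2}$; you should instead invoke the underlying GAVE criterion $\sigma_{max}(A^{-1}B)<1$ for $Ax+B\vert x\vert=f$ (the result from which Theorem \ref{thm 1}(iv) is itself obtained column by column), after which everything goes through.
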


% % % % % % % % % % % % % % % % % % % % % % % % % % % % % % % % % % % % % % % % % % % % 
% % % % % % % % % % % % % % % % % % % % % % % % % % % % % % % % % % % % % % % % % % % % 
% % % % % % % % % % % % % % % % % % % % % % % % % % % % % % % % % % % % % % % % % % % % 

\section{Main Results} 
%In this section, first, we provide the spectral radius condition for the unique solvability of the GAVME (\ref{Equ GAVME}). We then will obtain some necessary and sufficient unique solvability conditions for NGAVME (\ref{Equ NGAVME}). Finally, we correct a result for the Sylvester-like AVE (\ref{Equ Sylvester-like AVE}) provided in \cite{Wang and Li Sylvester 2021}.

The following Lemma is required to prove the spectral radius condition for the unique solvability of GAVME.

\begin{lem} \cite{wu2021unique} \label{lem 1}
	 If the following condition holds
\begin{equation*}
	 \rho (\vert A^{-1} B \vert)<1,
\end{equation*}
	 then GAVE (\ref{Equ GAVE}) has exactly one solution for arbitrary f, where matrix A is invertible.
\end{lem}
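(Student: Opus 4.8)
The plan is to recast the GAVE~(\ref{Equ GAVE}) as a fixed-point problem and then turn the spectral hypothesis $\rho(\vert A^{-1}B\vert)<1$ into honest convergence of an iteration. Since $A$ is invertible, left-multiplying $Ax+B\vert x\vert=f$ by $A^{-1}$ shows that $x$ solves the GAVE if and only if $x$ is a fixed point of the map $\Phi(x):=A^{-1}f-A^{-1}B\vert x\vert$. Writing $M:=\vert A^{-1}B\vert\ge 0$, the whole argument rests on the entrywise estimate
\[
\vert \Phi(x)-\Phi(y)\vert=\bigl\vert A^{-1}B(\vert x\vert-\vert y\vert)\bigr\vert\le \vert A^{-1}B\vert\,\bigl\vert \vert x\vert-\vert y\vert\bigr\vert\le M\vert x-y\vert,
\]
valid for all $x,y\in\mathcal{R}^{n}$, where the middle step uses $\vert Mz\vert\le\vert M\vert\vert z\vert$ and the last uses the coordinatewise reverse triangle inequality $\bigl\vert\vert x\vert-\vert y\vert\bigr\vert\le\vert x-y\vert$. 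All inequalities here are meant componentwise.

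For uniqueness I would suppose $x$ and $y$ both solve the GAVE and set $v:=\vert x-y\vert\ge 0$. The estimate gives $v\le Mv$, and since $M\ge 0$ preserves entrywise inequalities, iterating yields $v\le M^{k}v$ for every $k$. Because $\rho(M)<1$ is equivalent to $M^{k}\to 0$, letting $k\to\infty$ forces $v=0$, that is $x=y$.

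For existence I would run the Picard iteration $x^{(k+1)}=\Phi(x^{(k)})$ from an arbitrary $x^{(0)}$. Telescoping the estimate gives $\vert x^{(k+1)}-x^{(k)}\vert\le M^{k}\vert x^{(1)}-x^{(0)}\vert$, so for $m>k$ the differences are dominated entrywise by $\bigl(\sum_{j=k}^{m-1}M^{j}\bigr)\vert x^{(1)}-x^{(0)}\vert$. Since $\rho(M)<1$, the Neumann series $\sum_{j\ge 0}M^{j}$ converges to $(I-M)^{-1}\ge 0$, so its tails vanish entrywise; hence $\{x^{(k)}\}$ is Cauchy in each coordinate and converges to some $x^{\ast}\in\mathcal{R}^{n}$. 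As $x\mapsto\vert x\vert$ and hence $\Phi$ is continuous, passing to the limit in $x^{(k+1)}=\Phi(x^{(k)})$ gives $\Phi(x^{\ast})=x^{\ast}$, so $x^{\ast}$ is a solution.

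The hard part is the existence half rather than the uniqueness half: the ``contraction factor'' is the matrix $M$, not a scalar below $1$, so $\Phi$ need not be a contraction in the Euclidean norm and the Banach fixed-point theorem does not apply verbatim. The spectral condition $\rho(M)<1$ is precisely what repairs this, either through $M^{k}\to 0$ and the convergence of the Neumann series as above, or, alternatively, by producing a vector norm in which $\Phi$ becomes a genuine contraction via the standard converse to Theorem~\ref{thm 001} relating $\rho(M)$ to induced matrix norms. Both routes reduce to the same point, namely converting the entrywise spectral bound into metric convergence of the iterates.
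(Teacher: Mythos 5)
Your argument is correct and complete. The main thing to point out is that the paper itself offers no proof of this lemma: it is imported as a black box from the cited reference \cite{wu2021unique} and then used to deduce Theorem \ref{Main Thoerem}, so there is no in-paper argument to compare against. What you supply is a self-contained proof along the standard lines for results of this type: rewriting the GAVE (\ref{Equ GAVE}) as the fixed-point equation $x=\Phi(x)$ with $\Phi(x)=A^{-1}f-A^{-1}B\vert x\vert$, establishing the entrywise Lipschitz bound $\vert\Phi(x)-\Phi(y)\vert\le M\vert x-y\vert$ with $M=\vert A^{-1}B\vert\ge 0$, and then converting $\rho(M)<1$ into uniqueness via $M^{k}\to 0$ and into existence via convergence of the Neumann series $\sum_{j\ge 0}M^{j}=(I-M)^{-1}$. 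Every step checks out: the inequality $\vert Nz\vert\le\vert N\vert\,\vert z\vert$, the componentwise reverse triangle inequality, the monotonicity of multiplication by the nonnegative matrix $M$ needed to chain $v\le Mv$ into $v\le M^{k}v$, and the Cauchy estimate for the Picard iterates are all applied correctly. One small caveat concerns only the alternative route you sketch at the end: the norm produced by the converse of Theorem \ref{thm 001} with $\vert\vert\vert M\vert\vert\vert<1$ need not be monotone (absolute), and monotonicity is what lets you pass from $\bigl\vert\vert x\vert-\vert y\vert\bigr\vert\le\vert x-y\vert$ to the corresponding norm inequality; to make that version airtight one should instead build a weighted $\ell_{\infty}$ norm from a positive (near-)Perron vector of $M$. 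Since your primary argument is the entrywise one, this does not affect the validity of the proof.
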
 

 We have the following result based on Lemma \ref{lem 1}.

\begin{thm} \label{Main Thoerem}
	If invertible matrix A and  matrix B satisfy the condition
	  \begin{equation}\label{Equ5}
	\rho (\vert A^{-1} B \vert)<1,
	\end{equation}
then GAVME (\ref{Equ GAVME}) has exactly one solution for any matrix F.
\end{thm}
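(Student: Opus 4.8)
The plan is to reduce the matrix equation \eqref{Equ GAVME} to a family of vector equations of the form \eqref{Equ GAVE}, each governed by the \emph{same} coefficient matrices $A$ and $B$, and then to invoke Lemma \ref{lem 1}. The crucial observation is that the componentwise absolute value acts on a matrix column by column: writing $X=[x_1,x_2,\ldots,x_n]$ and $F=[f_1,f_2,\ldots,f_n]$ with $x_j,f_j\in\mathcal{R}^{n}$ the $j$th columns of $X$ and $F$, we have $\vert X\vert=[\vert x_1\vert,\vert x_2\vert,\ldots,\vert x_n\vert]$. Consequently the $j$th column of $AX+B\vert X\vert$ is exactly $Ax_j+B\vert x_j\vert$, so the single matrix equation $AX+B\vert X\vert=F$ is equivalent to the $n$ decoupled vector equations
\begin{equation*}
  Ax_j+B\vert x_j\vert=f_j,\qquad j=1,2,\ldots,n.
\end{equation*}

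First I would make this equivalence precise, noting that $X$ solves \eqref{Equ GAVME} if and only if each column $x_j$ solves the corresponding GAVE. Next, since $A$ is invertible and the hypothesis $\rho(\vert A^{-1}B\vert)<1$ involves only $A$ and $B$ --- which are shared by every one of the $n$ column equations --- Lemma \ref{lem 1} applies verbatim to each of them: for every right-hand side $f_j$ there exists exactly one $x_j\in\mathcal{R}^{n}$ with $Ax_j+B\vert x_j\vert=f_j$. Assembling these unique column solutions into $X=[x_1,\ldots,x_n]$ yields a solution of \eqref{Equ GAVME}, and since any solution is forced, column by column, to coincide with the unique column solutions guaranteed by Lemma \ref{lem 1}, this solution is unique.

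I do not expect a genuine obstacle here; the content is the reduction rather than a hard estimate. The one point requiring care is to carry \emph{both} existence and uniqueness through the decoupling: existence follows by solving the $n$ column problems independently, while uniqueness follows from the fact that a matrix solution of \eqref{Equ GAVME} has columns that must solve the individual GAVEs. An entirely equivalent route, more in keeping with the Kronecker-product machinery recalled in Section \ref{section 2}, is to apply the vec operator: setting $x=\mathrm{vec}(X)$ and $f=\mathrm{vec}(F)$ turns \eqref{Equ GAVME} into the single large GAVE $(I\otimes A)x+(I\otimes B)\vert x\vert=f$, because $\mathrm{vec}(\vert X\vert)=\vert\mathrm{vec}(X)\vert$. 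Here one would then verify, using $(I\otimes A)^{-1}=I\otimes A^{-1}$ and $\vert I\otimes M\vert=I\otimes\vert M\vert$, that
\begin{equation*}
  \rho\big(\vert (I\otimes A)^{-1}(I\otimes B)\vert\big)=\rho\big(I\otimes\vert A^{-1}B\vert\big)=\rho(\vert A^{-1}B\vert)<1,
\end{equation*}
so that Lemma \ref{lem 1} again produces a unique $x$, hence a unique $X$. Both arguments hinge on the same fact, namely that the spectral condition is left unchanged under the passage from the vector to the matrix setting.
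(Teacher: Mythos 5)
Your proposal is correct and follows essentially the same route as the paper: both decompose the matrix equation column by column into $n$ decoupled GAVEs sharing the same $A$ and $B$, and apply Lemma \ref{lem 1} to each column since the hypothesis $\rho(\vert A^{-1}B\vert)<1$ depends only on $A$ and $B$. Your treatment is in fact slightly more careful than the paper's (you explicitly track both existence and uniqueness through the decoupling), and the alternative vec/Kronecker argument you sketch is a valid supplementary observation not present in the paper.
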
 
\begin{proof}
 Let $X=(x_{1},...,x_{m})$ and $F=(f_{1},...,f_{m}),$ where $x_{j}$ and $f_{j}$	are the $j$th column of the matrices X and F, respectively. By $\vert X \vert=(\vert x_{1}\vert,...,\vert x_{m}\vert),$ GAVME (\ref{Equ GAVME}) can be rewritten as 
 $A(x_{1},...,x_{m}) + B(\vert x_{1}\vert,...,\vert x_{m}\vert)$ =  $(f_{1},...,f_{m}),$ or equivalently,
 \begin{equation} \label{Equ GAVME2}
	Ax_{j} + B \vert x_{j} \vert =f_{j}, 
\end{equation}
 where $j=1,2,...,m.$ When the condition $\rho (\vert A^{-1} B \vert)<1$ is true, then Lemma \ref{lem 1} allows   GAVE (\ref{Equ GAVME2}) to have a unique solution for each k. From this point, we may compute each $x_k$ individually.
\end{proof}

% % % % % % % % % % % % % % % % % % % % % % % % % % % % % % % % % % % % % % % % % % % % %
% % % % % % % % % % % % % % % % % % % % % % % % % % % % % % % % % % % % % % % % % % % % %
% % % % % % % % % % % % % % % % % % % % % % % % % % % % % % % % % % % % % % % % % % % % %

%%%%%%%%%%%%%%%%%%%%%%%%%%%%%%%%%%%%%%%%%%%%%%%%%%%%%%%%%%%%%%%%%%%%%%%%%%%%%%%%%%%%%%%%
%From Theorem \ref{Main Thoerem}, we can quickly get the following Corollary.
%\begin{cor} \label{Main Corollary}
%If A is an invertible matrix and satisfies the following condition
%	\begin{equation}
%	\rho (\vert A^{-1} \vert)<1,
%	\end{equation}
%	then absolute value matrix equation $AX + \vert X \vert =F$ has exactly one solution for any matrix F.
%\end{cor}
%%%%%%%%%%%%%%%%%%%%%%%%%%%%%%%%%%%%%%%%%%%%%%%%%%%%%%%%%%%%%%%%%%%%%%%%%%%%%%%%%%%%%%%%

In Theorem \ref{Main Thoerem}, we have a new result for the unique solvability of (\ref{Equ GAVME}). In practice, our condition is easy to verify, whereas conditions given in Theorem \ref{thm 2} and Theorem \ref{thm 5} have limited practical uses. In Theorem \ref{thm 2}, we have to determine the determinants of all column representative matrices of the given set and to apply Theorem \ref{thm 5}, we need to find all diagonal matrix D, which is not an easy task.

 Further, based on Theorem \ref{thm 01}, we have $\rho(\vert A^{-1}B \vert)$ $\leq$ $\rho(\vert A^{-1} \vert .\vert B \vert)$. This implies that condition $\rho(\vert A^{-1}B \vert)$ is slighter superior than $\rho(\vert A^{-1} \vert .\vert B \vert)$. To check our condition's validity, we are considering Example \ref{Ex2} and Example \ref{Ex1}. We are comparing our spectral radius condition (\ref{Equ5}) with all three conditions given in Theorem \ref{thm 1}. We observe that, our spectral radius condition (\ref{Equ5}) is slighter superior than the condition $\rho(\vert A^{-1} \vert . \vert B \vert) < 1$,  and in some instances, the performance of our condition is better than the singular value condition shown in Theorem \ref{thm 1}.

 % % % % % % % % % % % % % % % % % % % % % % % % % % % % % % % % % % % % % % % % % % % % %
 % % % % % % % % % % % % % % % % % % % % % % % % % % % % % % % % % % % % % % % % % % % % %
 % % % % % % % % % % % % % % % % % % % % % % % % % % % % % % % % % % % % % % % % % % % % %
 
% For some instances our condition perform better to the condition of Kumar et al. [Natl. Acad. Sci. Lett. (2023) 1-3].
 
 \begin{ex} \label{Ex2} \cite{tang2022solvability}
 	Let 
 	\begin{equation*}
 		A=
 		\begin{bmatrix}
 			5 &  -1\\
 			-4  &  4
 		\end{bmatrix}
 		~ ~  and ~ ~	B=
 		\begin{bmatrix}
 			-0.5 &  1 \\
 			0.5 & -2
 		\end{bmatrix}.
 	\end{equation*}	
 
Then sufficient condition $\rho(\vert A^{-1} B \vert)=0.38826 <1$ holds. So the GAVME (\ref{Equ GAVME}) has a unique solution. Further, $\rho(\vert A^{-1} \vert . \vert B \vert)= 1 \nless 1$, $\sigma_{max}(B)=2.3354 \nless 2.1939 = \sigma_{min}(A)$, and $\sigma_{max}(\vert B \vert)=2.3354 \nless 2.1939 = \sigma_{min}(A)$. The three conditions of the Theorem \ref{thm 1} are not holding, so these conditions fail to evaluate the unique solution of the GAVME (\ref{Equ GAVME}).
 \end{ex}

% % % % % % % % % % % % % % % % % % % % % % % % % % % % % % % % % % % % % % % % % % % % %
% % % % % % % % % % % % % % % % % % % % % % % % % % % % % % % % % % % % % % % % % % % % %
% % % % % % % % % % % % % % % % % % % % % % % % % % % % % % % % % % % % % % % % % % % % %
%	\begin{equation*}
%	\begin{bmatrix}
%		5 & -1  \\
%		-4 &  4 
%	\end{bmatrix}
%	\begin{bmatrix}
%		x_1 & x_2  \\
%		x_3 &  x_4 
%	\end{bmatrix}
%	+
%	\begin{bmatrix}
%		-0.5 & 1    \\
%		0.5 & -2 
%	\end{bmatrix}
%	\begin{bmatrix}
%		|x_1| & |x_2|  \\
%		|x_3| &  |x_4|
%	\end{bmatrix}
%	=
%	\begin{bmatrix}
%		0 & 0 \\
%		0 & 0 
%	\end{bmatrix}
%\end{equation*}
 
 % % % % % % % % % % % % % % % % % % % % % % % % % % % % % % % % % % % % % % % % % % % % %
 % % % % % % % % % % % % % % % % % % % % % % % % % % % % % % % % % % % % % % % % % % % % %
 % % % % % % % % % % % % % % % % % % % % % % % % % % % % % % % % % % % % % % % % % % % % %

\begin{ex} \label{Ex1}
	Consider the following GAVME (\ref{Equ GAVME})
	\begin{equation*}
		\begin{bmatrix}
			2 & -4 & 0 \\
			0 &  1.2 & 1.1 \\
			-2 &  0.8 & 0
		\end{bmatrix}
		\begin{bmatrix}
			x_1 & x_2 & x_3 \\
			x_4 &  x_5 & x_6 \\
			x_7 &  x_8 & x_9
		\end{bmatrix}
		+
		\begin{bmatrix}
			1 & -1 & 0   \\
			0 & 1 & 1 \\
			-1 & 0 & 0
		\end{bmatrix}
		\begin{bmatrix}
			|x_1| & |x_2| & |x_3| \\
			|x_4| &  |x_5| & |x_6| \\
			|x_7| &  |x_8| & |x_9|
		\end{bmatrix}
		=
		\begin{bmatrix}
		   -5.5 & 9  & 1 \\
			0.8 & 3.8 & 1.8 \\
			3.4 & -4.6 & -5.2
		\end{bmatrix}
	\end{equation*}	
	
	Clearly, $\rho(\vert A^{-1} B \vert)=0.9091 <1.$ Further,  $\sigma_{max}(A^{-1}B) = 1.0885 \nless 1,$  $\sigma_{max}(\vert B \vert)$= 1.8019 $\nless$ $\sigma_{min}(A)=0.9038$ and $\sigma_{max}(B)= 1.8019$ $\nless$ $\sigma_{min}(A)=0.9038.$ Here, conditions of Theorem (\ref{thm 1}) fail to evaluate  to judge the unique solution of the GAVME (\ref{Equ GAVME})
	This GAVME has a unique solution X = 	
	\begin{equation*}
		\begin{bmatrix}
			-3  & 1 & 2 \\
			0.5 & -2 & 1 \\
			-3 & 2 & -4
		\end{bmatrix}
		.
	\end{equation*}
\end{ex}

%%%%%%%%%%%%%%%%%%%%%%%%%%%%%%%%%%%%%%%%%%%%%%%%%%%%%%%%%%%%%%%%%%%%%%%%%%%%%%%%%%%%%%%%
\subsection{Conditions for the Uniqueness of the NGAVME Solution}
%Here, we discuss the unique solvability of the NGAVME (\ref{Equ NGAVME}). By the assumption that matrix C is invertible, then NGAVME (\ref{Equ NGAVME}) can be written as the following GAVE form

%Here, we discuss the unique solvability of the NGAVME (\ref{Equ NGAVME}). To obtain the unique solvability conditions for the NGAVME (\ref{Equ NGAVME}), we utilize the results of the GAVME (\ref{Equ GAVME}).

Now, we discuss the unique solvability of the NGAVME (\ref{Equ NGAVME}). To obtain the unique solvability conditions for the NGAVME (\ref{Equ NGAVME}), we utilize the equivalence between the NGAVME (\ref{Equ NGAVME}) and the GAVME (\ref{Equ GAVME}).

By assuming that matrix C is invertible, NGAVME (\ref{Equ NGAVME}) can be written as the following equivalent GAVME form
\begin{equation}\label{Equ NGAVME 1}
	AC^{-1}Y+B\vert Y \vert=F,~ with ~ Y=CX.
\end{equation}

%To obtain the result of unique solvability for NGAVME (\ref{Equ NGAVME}), we use the Equ. (\ref{Equ NGAVME 1}) into Theorem \ref{thm 1} to \ref{thm 5} and Theorem \ref{Main Thoerem}.

% % % % % % % % % % % % % % % % % % % % % % % % % % % % % % % % % % % % % % % % % % % % %
Based on Theorem \ref{thm 1}, we have the following result.
\begin{thm}\label{Thm 1} 
The NGAVME (\ref{Equ NGAVME}) has exactly one solution if any of the following conditions are satisfied:\\
	(i)  $\sigma_{max}(\vert B \vert) < \sigma_{min}(AC^{-1}),$ where C is invertible; \\
	(ii) $\sigma_{max}(B) < \sigma_{min}(AC^{-1})$, where C is invertible; \\
	(iii) $\rho (\vert CA^{-1} \vert . \vert B \vert)<1$;\\
	(iv) $\sigma_{max}(CA^{-1}B) < 1,$ \\ where A is an invertible matrix.
\end{thm}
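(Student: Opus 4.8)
The plan is to reduce the NGAVME (\ref{Equ NGAVME}) to a GAVME of the form (\ref{Equ GAVME}) and then invoke Theorem \ref{thm 1} verbatim, with the coefficient matrix $A$ there replaced by $AC^{-1}$. First, since $C$ is assumed invertible, I would introduce the change of variable $Y = CX$; because $C$ acts as a bijection on $\mathcal{R}^{n\times n}$, every solution $X$ of (\ref{Equ NGAVME}) corresponds to exactly one $Y$ and vice versa through $X = C^{-1}Y$. Substituting $X = C^{-1}Y$ into $AX + B\vert CX \vert = F$ turns it into $AC^{-1}Y + B\vert Y \vert = F$, which is precisely the equivalent GAVME (\ref{Equ NGAVME 1}). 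The crucial consequence is that (\ref{Equ NGAVME}) has a unique solution if and only if this GAVME in $Y$ has a unique solution, so it suffices to guarantee unique solvability of the latter.

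Next I would apply Theorem \ref{thm 1} to the GAVME $AC^{-1}Y + B\vert Y \vert = F$, treating $AC^{-1}$ as the leading coefficient matrix. Parts (i) and (ii) follow immediately: the two singular-value conditions in Theorem \ref{thm 1} read $\sigma_{max}(\vert B \vert) < \sigma_{min}(A)$ and $\sigma_{max}(B) < \sigma_{min}(A)$, and replacing $A$ by $AC^{-1}$ yields exactly conditions (i) and (ii) of the present statement.

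For parts (iii) and (iv) the only genuine computation is identifying the inverse of the new coefficient matrix. Since $A$ and $C$ are both invertible, $AC^{-1}$ is invertible with $(AC^{-1})^{-1} = CA^{-1}$. Substituting this into Theorem \ref{thm 1}(iii), namely $\rho(\vert A^{-1} \vert \cdot \vert B \vert) < 1$, gives $\rho(\vert CA^{-1} \vert \cdot \vert B \vert) < 1$, which is condition (iii); substituting into Theorem \ref{thm 1}(iv), namely $\sigma_{max}(A^{-1}B) < 1$, gives $\sigma_{max}(CA^{-1}B) < 1$, which is condition (iv).

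I do not expect a serious obstacle here, since the argument is essentially a transfer of the known GAVME results through an invertible substitution. The two points that require care are confirming that the change of variable $Y = CX$ preserves both existence and uniqueness (guaranteed by the invertibility of $C$), and correctly computing $(AC^{-1})^{-1} = CA^{-1}$ so that the inverse-based conditions (iii) and (iv) emerge in the stated form rather than with $A^{-1}C$; the invertibility of $AC^{-1}$ used in those two parts is exactly what the standing hypothesis of $A$ invertible, together with $C$ invertible, supplies.
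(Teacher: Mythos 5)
Your proposal is correct and follows exactly the paper's own route: the paper's proof simply substitutes $Y=CX$ to obtain the equivalent GAVME $AC^{-1}Y+B\vert Y\vert=F$ (its Equ.~(\ref{Equ NGAVME 1})) and invokes Theorem \ref{thm 1} with $A$ replaced by $AC^{-1}$. You merely spell out the details the paper leaves implicit, in particular the bijectivity of the change of variable and the identity $(AC^{-1})^{-1}=CA^{-1}$ that yields conditions (iii) and (iv).
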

\begin{proof}
 To prove the above result, we will use Equ. (\ref{Equ NGAVME 1}) into Theorem \ref{thm 1}. Since Equ. (\ref{Equ NGAVME 1}) is equivalent to GAVME form \ref{Equ GAVME}, so by Theorem \ref{thm 1}, our results of the Theorem \ref{Thm 1} holds.
	%The proof is direct hold by using Equ. (\ref{Equ NGAVME 1}) into Theorem \ref{thm 1}.		
\end{proof}

% % % % % % % % % % % % % % % % % % % % % % % % % % % % % % % % % % % % % % % % % % % % %
	
\begin{thm}\label{Thm 2} 
	The NGAVME (\ref{Equ NGAVME}) has exactly one solution if any of the following conditions are satisfied:\\
	(i)  $\{R+S,R-S\}$ holds the column  $\mathcal{W}$-property;\\
	(ii)  $R+S$ is invertible and  the column  $\mathcal{W}$-property is maintained by $\{I,(R+S)^{-1}(R-S)\}$ with $n^{2}-$ order identity matrix I; \\
	(iii) $R+S$ is invertible and matrix $(R+S)^{-1}(R-S)$ is a P-matrix;\\
	(iv)  $\{(R+S)F_1+(R-S)F_2\}$ is invertible, where $F_1,F_2$ $\in$ $\mathcal{R}^{n^{2} \times n^{2}}$ are two any non-negative diagonal matrices with $diag(F_1+F_2) > 0,$ where $R=I \otimes AC^{-1}$ and $S=I \otimes B.$
\end{thm}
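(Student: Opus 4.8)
The plan is to leverage the equivalence between the NGAVME (\ref{Equ NGAVME}) and its GAVME reformulation (\ref{Equ NGAVME 1}), exactly as was done in the proof of Theorem \ref{Thm 1}, and then to transfer each of the four parts of Theorem \ref{thm 2} through a direct identification of the coefficient matrices. Since $C$ is assumed invertible, the substitution $Y=CX$ is a bijection on $\mathcal{R}^{n\times n}$; consequently, $X$ is a (unique) solution of (\ref{Equ NGAVME}) if and only if $Y=CX$ is a (unique) solution of the GAVME $AC^{-1}Y+B\vert Y\vert=F$ appearing in (\ref{Equ NGAVME 1}). This reduces the problem to establishing unique solvability of a GAVME whose leading coefficient is $AC^{-1}$ and whose absolute-value coefficient remains $B$.

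First I would apply Theorem \ref{thm 2} to the GAVME (\ref{Equ NGAVME 1}), replacing the matrix $A$ appearing there by $AC^{-1}$ while keeping $B$ unchanged. Under this replacement, the auxiliary matrices of Theorem \ref{thm 2} become $P=I\otimes(AC^{-1})$ and $Q=I\otimes B$, which are precisely the matrices $R$ and $S$ defined in the present statement. It then follows immediately that $Q+P=R+S$ and $-Q+P=R-S$, so that the pair $\{Q+P,-Q+P\}$ coincides with $\{R+S,R-S\}$ and the matrix $(Q+P)^{-1}(-Q+P)$ coincides with $(R+S)^{-1}(R-S)$. With these identifications in hand, conditions (i)--(iv) of Theorem \ref{thm 2} translate verbatim into conditions (i)--(iv) of the present theorem, each one guaranteeing the unique solvability of (\ref{Equ NGAVME 1}), and hence of (\ref{Equ NGAVME}).

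The only point requiring care, though it is entirely elementary, is the preservation of uniqueness under the change of variables: I would verify that the bijectivity of the map $X\mapsto CX$, which stems from the invertibility of $C$, yields a one-to-one correspondence between the solution sets, so that a unique $Y$ produces a unique $X=C^{-1}Y$ and conversely. I do not expect any genuine obstacle beyond this bookkeeping, since each of the four conditions is a statement purely about the Kronecker-structured matrices $R$ and $S$, which are exactly the images of $P$ and $Q$ under the coefficient substitution $A\mapsto AC^{-1}$. The substantive content therefore lies entirely in Theorem \ref{thm 2}, and the present theorem is obtained by specialization through the equivalence (\ref{Equ NGAVME 1}).
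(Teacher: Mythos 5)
Your proposal is correct and follows essentially the same route as the paper: both reduce the NGAVME to the equivalent GAVME $AC^{-1}Y+B\vert Y\vert=F$ via the substitution $Y=CX$ (using invertibility of $C$) and then invoke Theorem \ref{thm 2} with $P=I\otimes AC^{-1}$ and $Q=I\otimes B$, which are exactly $R$ and $S$. The paper states this in one line; your version merely makes explicit the bijectivity of $X\mapsto CX$ and the identification $Q+P=R+S$, $-Q+P=R-S$, which is sound.
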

\begin{proof}
	By using Equ. (\ref{Equ NGAVME 1}) into Theorem \ref{thm 2}, then results of Theorem \ref{Thm 2} automatically holds.		
\end{proof}
% % % % % % % % % % % % % % % % % % % % % % % % % % % % % % % % % % % % % % % % % % % % %

\begin{thm} \label{Thm 4} 
	In the case that each diagonal entry of $R+S$ has the same sign as its corresponding entry of $R-S$ then NGAVME (\ref{Equ NGAVME}) has exactly one solution if any of the following statements is true: \\
	(i)	$R+S$ and $R-S$ are strictly diagonally dominant by columns;  \\
	(ii) $R+S$, $R-S$ and all their column representative matrices are irreducibly diagonally dominant by columns, where $R=I \otimes AC^{-1}$ and $S=I \otimes B.$
\end{thm}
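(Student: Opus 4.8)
The plan is to exploit the same reduction used in the proofs of Theorem \ref{Thm 1} and Theorem \ref{Thm 2}, namely the equivalence recorded in Equation (\ref{Equ NGAVME 1}). Assuming $C$ is invertible, the substitution $Y = CX$ converts the NGAVME (\ref{Equ NGAVME}) into the GAVME form
\begin{equation*}
	AC^{-1}Y + B\vert Y \vert = F.
\end{equation*}
Because $X \mapsto CX$ is a bijection of $\mathcal{R}^{n \times n}$ onto itself, the NGAVME possesses exactly one solution $X$ if and only if this reduced GAVME possesses exactly one solution $Y$; thus uniqueness transfers directly across the substitution, and it suffices to analyse the reduced equation.

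Next I would apply Theorem \ref{thm 4} to this reduced GAVME. In that theorem the coefficient roles are played by $P = I \otimes A$ and $Q = I \otimes B$, where $A$ is the matrix multiplying the unknown and $B$ is the matrix multiplying its absolute value. In our reduced equation the matrix multiplying $Y$ is $AC^{-1}$ and the matrix multiplying $\vert Y \vert$ is $B$, so the relevant Kronecker matrices become $P = I \otimes AC^{-1} = R$ and $Q = I \otimes B = S$, exactly matching the definitions in the statement. Under this identification the ``same sign'' requirement on the diagonal entries and hypotheses (i) and (ii) of Theorem \ref{Thm 4} become verbatim restatements of the corresponding hypotheses of Theorem \ref{thm 4} (with $R+S$, $R-S$ playing the roles of $Q+P$, $-Q+P$), so the desired conclusion follows immediately.

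There is no serious analytic obstacle in this argument; the result is essentially a transcription of Theorem \ref{thm 4} through the change of variables. The only points demanding care are bookkeeping. First, one must confirm that the invertibility of $C$ is genuinely assumed, so that the substitution $Y = CX$ is legitimate and the one-to-one correspondence of solutions is valid. Second, one must match $R+S$ and $R-S$ against $Q+P$ and $-Q+P$ in the correct order, so that the diagonal sign condition and the column diagonal-dominance and column-representative-matrix conditions are transported accurately from Theorem \ref{thm 4}. Once these identifications are fixed, invoking Theorem \ref{thm 4} on the reduced equation completes the proof.
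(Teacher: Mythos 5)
Your proposal is correct and follows exactly the paper's own route: reduce the NGAVME to the GAVME form $AC^{-1}Y+B\vert Y\vert=F$ via $Y=CX$ and invoke Theorem \ref{thm 4} with $P=I\otimes AC^{-1}=R$ and $Q=I\otimes B=S$. Your added bookkeeping (the bijectivity of $X\mapsto CX$ and the explicit matching of $R+S$, $R-S$ with $Q+P$, $-Q+P$) only makes explicit what the paper's one-line proof leaves implicit.
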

\begin{proof}
Since (\ref{Equ NGAVME 1}) is equivalent to the GAVME (\ref{Equ GAVME}), then Theorem \ref{thm 4} is applied to (\ref{Equ NGAVME 1}), and then we get our results.
%With the help of Theorem \ref{thm 4} and  Equ. (\ref{Equ NGAVME 1}), proof is completes.		
\end{proof}
% % % % % % % % % % % % % % % % % % % % % % % % % % % % % % % % % % % % % % % % % % % % %

\begin{thm} \label{Thm 5} 
	Let non-singular matrices A and C satisfy the condition
	$\rho((I \otimes CA^{-1}B)D) < 1$ for any diagonal matrix $D=diag(d_{i}) \in \mathcal{R}^{n^{2} \times n^{2}}$ with $d_{i} \in [-1,1],$ then the NGAVME (\ref{Equ NGAVME}) has exactly one solution.
\end{thm}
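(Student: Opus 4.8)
The plan is to follow the same reduction used for Theorems \ref{Thm 1}--\ref{Thm 4}: push the spectral-radius hypothesis through the equivalence between the NGAVME (\ref{Equ NGAVME}) and its companion GAVME (\ref{Equ NGAVME 1}), and then quote the GAVME criterion Theorem \ref{thm 5} verbatim.

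First I would invoke the substitution $Y = CX$, valid since $C$ is non-singular, which recasts the NGAVME (\ref{Equ NGAVME}) as the equivalent GAVME
\begin{equation*}
	(AC^{-1})Y + B\vert Y \vert = F.
\end{equation*}
Here the leading coefficient --- the matrix occupying the role of ``$A$'' in the statement of Theorem \ref{thm 5} --- is $AC^{-1}$, whereas the coefficient multiplying the absolute-value block is still $B$.

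Next I would apply Theorem \ref{thm 5} to this GAVME. Its hypothesis demands $\rho\bigl((I \otimes (AC^{-1})^{-1}B)D\bigr) < 1$ for every diagonal $D = diag(d_i) \in \mathcal{R}^{n^2 \times n^2}$ with $d_i \in [-1,1]$. The single computation required is $(AC^{-1})^{-1} = CA^{-1}$, which holds because both $A$ and $C$ are invertible; inserting it shows that the required bound is precisely $\rho\bigl((I \otimes CA^{-1}B)D\bigr) < 1$, i.e. exactly the hypothesis of the present theorem.

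Consequently Theorem \ref{thm 5} yields a unique $Y$ solving the GAVME form, and since $C$ is invertible, $X = C^{-1}Y$ is the unique solution of the original NGAVME (\ref{Equ NGAVME}). I do not anticipate any genuine obstacle: the whole argument is a transfer of hypotheses across the equivalence (\ref{Equ NGAVME 1}). The only step meriting care is tracking how the coefficient matrices transform under the substitution --- specifically confirming that the inverse appearing in Theorem \ref{thm 5} becomes $CA^{-1}$ rather than $C^{-1}A^{-1}$, so that the Kronecker factor $I \otimes CA^{-1}B$ matches the stated condition exactly.
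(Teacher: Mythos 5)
Your proposal is correct and is essentially the paper's own argument: the paper likewise obtains Theorem \ref{Thm 5} by substituting $Y=CX$ to pass to the equivalent GAVME (\ref{Equ NGAVME 1}) and then applying Theorem \ref{thm 5}, with the key identification $(AC^{-1})^{-1}B = CA^{-1}B$ that you verify explicitly. Your write-up is in fact more detailed than the paper's one-line proof, but the route is the same.
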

\begin{proof}
	The proof of the above result can be obtained directly by the Theorem \ref{thm 5} and  Equ. (\ref{Equ NGAVME 1}).		
\end{proof}
% % % % % % % % % % % % % % % % % % % % % % % % % % % % % % % % % % % % % % % % % % % % %

\begin{thm} \label{Thm 6}
	The NGAVME (\ref{Equ NGAVME}) has exactly one solution if the condition  $\rho(\vert CA^{-1}B \vert)<1$ holds, where A is an invertible matrix.
\end{thm}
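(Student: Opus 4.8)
The plan is to transform the NGAVME into an ordinary GAVME for which Theorem~\ref{Main Thoerem} is directly applicable, and then translate the resulting unique solvability back to the original variable. First I would invoke the equivalence already recorded in~(\ref{Equ NGAVME 1}): assuming $C$ is invertible and writing $Y=CX$, the NGAVME (\ref{Equ NGAVME}) is equivalent to the GAVME
\begin{equation*}
AC^{-1}Y + B\vert Y\vert = F .
\end{equation*}
This is a GAVME with leading (invertible) coefficient $\tilde A = AC^{-1}$ and absolute-value coefficient $\tilde B = B$, and the correspondence $X\mapsto Y=CX$ is a bijection between the two solution sets.

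The key step is then to match hypotheses. Theorem~\ref{Main Thoerem} guarantees a unique solution of the reduced GAVME whenever $\rho\bigl(\vert \tilde A^{-1}\tilde B\vert\bigr)<1$, and here
\begin{equation*}
\tilde A^{-1}\tilde B = (AC^{-1})^{-1}B = CA^{-1}B ,
\end{equation*}
so the condition reads exactly $\rho(\vert CA^{-1}B\vert)<1$, which is the hypothesis of the present theorem. Hence the reduced GAVME has a unique solution $Y$, and since $C$ is invertible, $X=C^{-1}Y$ is the unique solution of the NGAVME. This mirrors the pattern already used to derive Theorems~\ref{Thm 1}--\ref{Thm 5}.

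The point that deserves the most care---and the main obstacle to a fully clean statement---is the role of $C$. The reduction via~(\ref{Equ NGAVME 1}) tacitly requires $C$ to be invertible, yet the theorem as stated only flags invertibility of $A$. To justify the weaker hypothesis I would avoid the change of variable $X=C^{-1}Y$ and instead recover $X$ from $Y$ using only $A^{-1}$: starting from $AX=F-B\vert CX\vert$ and left-multiplying by $C$ shows that any solution $X$ yields $Y=CX$ solving $Y+CA^{-1}B\vert Y\vert=CA^{-1}F$, a GAVME with leading coefficient $I$ and the same condition $\rho(\vert CA^{-1}B\vert)<1$. Conversely, given the unique such $Y$, I would set $X:=A^{-1}(F-B\vert Y\vert)$ and verify $CX=Y$ by substitution, so that $\vert CX\vert=\vert Y\vert$ and $AX+B\vert CX\vert=F$. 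Checking that this pairing is a bijection---so that uniqueness of $Y$ transfers to uniqueness of $X$ even when $C$ is singular---is the only nontrivial part of the argument.
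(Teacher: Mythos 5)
Your first argument is precisely the paper's own proof: the paper establishes Theorem \ref{Thm 6} by feeding the reduced form (\ref{Equ NGAVME 1}) into the spectral radius criterion of Theorem \ref{Main Thoerem} (the printed proof cites Theorem \ref{thm 7}, but that is evidently a slip, since Theorem \ref{thm 7} is a singular-value condition for the Sylvester-like AVE and cannot yield a spectral radius statement), and your computation $\tilde A^{-1}\tilde B = (AC^{-1})^{-1}B = CA^{-1}B$ is the entire content of that step. Where you go genuinely beyond the paper is in your closing observation: the reduction (\ref{Equ NGAVME 1}) silently assumes $C$ invertible, while the theorem as stated only assumes $A$ invertible. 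Your alternative route --- left-multiplying $AX = F - B\vert CX\vert$ by $CA^{-1}$ to obtain $Y + CA^{-1}B\vert Y\vert = CA^{-1}F$ with $Y = CX$, invoking uniqueness of $Y$ from $\rho(\vert CA^{-1}B\vert) < 1$, and recovering $X = A^{-1}(F - B\vert Y\vert)$ --- is sound and closes that gap: any solution $X$ yields a solution $Y = CX$ of the reduced equation, so two solutions $X_1, X_2$ would give $CX_1 = CX_2$, hence $B\vert CX_1\vert = B\vert CX_2\vert$ and $AX_1 = AX_2$, forcing $X_1 = X_2$ by invertibility of $A$; and the constructed $X$ satisfies $CX = Y$ by direct substitution, so it genuinely solves the NGAVME. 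This buys you the theorem exactly as stated, with no hypothesis on $C$, whereas the paper's argument as written only covers the case of invertible $C$.
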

\begin{proof}
	The proof is directly hold by using  the Theorem \ref{thm 7} and  Equ. (\ref{Equ NGAVME 1}).	
%	The proof is of the above result can be obtained by the Theorem \ref{thm 7} and  Equ. (\ref{Equ NGAVME 1}).		
\end{proof}
% % % % % % % % % % % % % % % % % % % % % % % % % % % % % % % % % % % % % % % % % % % % %
By the properties of singular values, Kronecker product and Theorem \ref{thm 001}, we have the following calculation: 
% % % % % % % % % % % % % % % % % % % % % % % % % % % % % % % % % % % % % % % % % % % % %
%\begin{equation*}
%\begin{split}	
%	\rho((I \otimes CA^{-1}B)D) & \leq \sigma_{max}((I \otimes CA^{-1}B)D) \\ & \leq  \sigma_{max}(I \otimes CA^{-1}B)\sigma_{max}(D) \\ & \leq  \sigma_{max}(I \otimes CA^{-1}B) \\ & =   \sigma_{max}(I) \sigma_{max}(CA^{-1}B) \\ & = \sigma_{max}(CA^{-1}B).
%\end{split}
%\end{equation*}

	$\rho((I \otimes CA^{-1}B)D) \leq$ 
	 $\sigma_{max}((I \otimes CA^{-1}B)D) \leq$
	  $\sigma_{max}(I \otimes CA^{-1}B)\sigma_{max}(D) \leq$
	   $\sigma_{max}(I \otimes CA^{-1}B) =$
	    $\sigma_{max}(I) \sigma_{max}(CA^{-1}B) =$
	     $\sigma_{max}(CA^{-1}B).$
% % % % % % % % % % % % % % % % % % % % % % % % % % % % % % % % % % % % % % % % % % % % %

%By above expression and Theorem \ref{Thm 6}, we get the following result.
We get the following result by the above expression and Theorem \ref{Thm 6}. 
\begin{prop} \label{Thm 06}
	The NGAVME (\ref{Equ NGAVME}) has exactly one solution if the condition  $\sigma_{max}(CA^{-1}B)<1$ holds, where A is an invertible matrix.
\end{prop}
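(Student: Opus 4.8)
The plan is to derive Proposition \ref{Thm 06} as an immediate consequence of the chain of inequalities displayed just before it, combined with Theorem \ref{Thm 5}. The strategy is essentially a specialization argument: the hypothesis $\sigma_{max}(CA^{-1}B)<1$ is shown to imply the more general hypothesis of Theorem \ref{Thm 5}, namely that $\rho\bigl((I\otimes CA^{-1}B)D\bigr)<1$ for \emph{every} diagonal matrix $D=\mathrm{diag}(d_i)$ with $d_i\in[-1,1]$.

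First I would invoke the displayed inequality chain verbatim, which establishes
\begin{equation*}
\rho\bigl((I\otimes CA^{-1}B)D\bigr)\le \sigma_{max}(CA^{-1}B)
\end{equation*}
uniformly in $D$. The key ingredients here are Theorem \ref{thm 001} (spectral radius bounded by any matrix norm, applied with the spectral norm $\sigma_{max}$), the submultiplicativity of $\sigma_{max}$ under products, the multiplicativity of $\sigma_{max}$ under the Kronecker product together with $\sigma_{max}(I)=1$, and the observation that $\sigma_{max}(D)\le 1$ since every diagonal entry of $D$ lies in $[-1,1]$. Each of these facts is standard and is already cited or used earlier in the excerpt, so no new machinery is needed.

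Next, assuming $\sigma_{max}(CA^{-1}B)<1$, I would conclude from the chain that $\rho\bigl((I\otimes CA^{-1}B)D\bigr)<1$ holds for \emph{all} admissible $D$. This is exactly the premise required by Theorem \ref{Thm 5}, so applying that theorem yields that the NGAVME (\ref{Equ NGAVME}) has exactly one solution, completing the argument. The invertibility of $A$ carries over as the standing hypothesis, and the equivalence (\ref{Equ NGAVME 1}) underlying Theorem \ref{Thm 5} supplies the link back to the original equation.

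The only step requiring a little care, rather than a genuine obstacle, is verifying that the bound $\sigma_{max}(CA^{-1}B)$ is truly \emph{uniform} over the continuum of choices of $D$: the inequality must hold for the worst-case $D$, not merely for a fixed one. This is guaranteed precisely because the intermediate factor $\sigma_{max}(D)\le 1$ is itself uniform in $D$, so the final bound is free of $D$ and the strict inequality $\sigma_{max}(CA^{-1}B)<1$ transfers to $\rho\bigl((I\otimes CA^{-1}B)D\bigr)<1$ simultaneously for every such $D$. Once this uniformity is noted, the proof is a direct application of Theorem \ref{Thm 5}.
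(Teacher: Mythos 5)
Your proposal is correct and follows essentially the same route as the paper: the displayed inequality chain gives the $D$-uniform bound $\rho\bigl((I\otimes CA^{-1}B)D\bigr)\le\sigma_{max}(CA^{-1}B)$, and the hypothesis $\sigma_{max}(CA^{-1}B)<1$ then feeds this into the spectral-radius criterion of Theorem \ref{Thm 5}. In fact you cite the right theorem: the paper's text says the result follows from ``the above expression and Theorem \ref{Thm 6}{}'', which appears to be a slip for Theorem \ref{Thm 5}, since the quantity bounded in the chain is exactly the one appearing in that theorem's hypothesis.
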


Although the condition of Proposition \ref{Thm 06} is also given in Theorem \ref{Thm 1}, here we use a different approach.
By relation $\sigma_{max}(A) \sigma_{min}(A^{-1})=1$ for invertible matrix A, the following result is also can be used instead of the  Proposition \ref{Thm 06}.
\begin{cor} \label{Cor 1}
	The NGAVME (\ref{Equ NGAVME}) has exactly one solution if the condition  $\sigma_{min}(B^{-1}AC^{-1})>1$ holds, where B and C are invertible.
\end{cor}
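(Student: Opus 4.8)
The plan is to derive Corollary \ref{Cor 1} directly from Proposition \ref{Thm 06} by manipulating the singular-value condition $\sigma_{max}(CA^{-1}B)<1$ into the equivalent form $\sigma_{min}(B^{-1}AC^{-1})>1$. The key algebraic fact to exploit is the stated relation $\sigma_{max}(M)\,\sigma_{min}(M^{-1})=1$ for any invertible matrix $M$. So the first step is to recognize that the matrix $CA^{-1}B$ should play the role of $M^{-1}$, which forces $M=(CA^{-1}B)^{-1}$.

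First I would note that under the hypotheses $A$, $B$, and $C$ are all invertible, so the product $CA^{-1}B$ is invertible and its inverse is $(CA^{-1}B)^{-1}=B^{-1}AC^{-1}$. Then I would apply the relation with $M=B^{-1}AC^{-1}$, giving
\begin{equation*}
\sigma_{max}\bigl((B^{-1}AC^{-1})^{-1}\bigr)\,\sigma_{min}(B^{-1}AC^{-1})=1,
\end{equation*}
that is, $\sigma_{max}(CA^{-1}B)\,\sigma_{min}(B^{-1}AC^{-1})=1$. Since both singular values are positive, the inequality $\sigma_{max}(CA^{-1}B)<1$ is equivalent to $\sigma_{min}(B^{-1}AC^{-1})>1$. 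Invoking Proposition \ref{Thm 06} then yields unique solvability of the NGAVME.

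The proof is essentially a one-line substitution into an already-stated identity, so there is no genuine mathematical obstacle; the only care needed is bookkeeping on which matrix is inverted. The mildly delicate point is justifying that $B$ and $C$ must be assumed invertible (beyond the invertibility of $A$ inherited from Proposition \ref{Thm 06}): invertibility of $C$ is already required to pass from the NGAVME \eqref{Equ NGAVME} to the equivalent GAVME form \eqref{Equ NGAVME 1}, and invertibility of $B$ is exactly what makes the expression $B^{-1}AC^{-1}$ well defined. I would state these hypotheses explicitly and then close with the equivalence argument above. No step requires grinding through computation, so the whole argument can be presented compactly in two or three displayed lines.
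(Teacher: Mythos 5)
Your proposal is correct and follows exactly the route the paper intends: the corollary is obtained from Proposition \ref{Thm 06} by applying the identity $\sigma_{max}(M)\,\sigma_{min}(M^{-1})=1$ with $M=B^{-1}AC^{-1}$, whose inverse is $CA^{-1}B$. The paper gives only this one-line justification in the text preceding the corollary, and your write-up fills in the same bookkeeping (including the observation that invertibility of $B$ and $C$ is what makes $B^{-1}AC^{-1}$ well defined) without deviating from it.
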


% % % % % % % % % % % % % % % % % % % % % % % % % % % % % % % % % % % % % % % % % % % % %
% % % % % % % % % % % % % % % % % % % % % % % % % % % % % % % % % % % % % % % % % % % % %
%To validity of our results, we are considering the following small problem, although our result can be use for the larger problems in practice.

In practice, we may apply our results to the larger problems. To determine the validity of our results, we are considering the following small problem.

\begin{ex} \label{Ex3} 
	Consider the following matrices 
	\begin{equation*}
		A=
		\begin{bmatrix}
			-5 & 2 & 8\\
			1 & 2 & 3 \\
			7 & -5 & 0  
		\end{bmatrix},
		~B=
		\begin{bmatrix}
			1 & 2 & 0\\
			0 & 1 & -1 \\
			-1 & 2 & 0
		\end{bmatrix},
	~C=
	\begin{bmatrix}
		2 & 0 & 0\\
		0 & 1 & 0 \\
		0 & 0 & 2
	\end{bmatrix}
~~ and ~~	F=
\begin{bmatrix}
	14 & -7 & 19\\
	12 & 4 & 3 \\
	1 & 39 & -12
\end{bmatrix}.
	\end{equation*}	

By simple calculations, we have,
$\sigma_{max}(CA^{-1}B)= 0.90873 <1$, $\rho(\vert CA^{-1}B \vert) = 0.70285 <1$, and $\sigma_{min}(B^{-1}AC^{-1})= 1.1004 >1$. The conditions of Theorem \ref{Thm 6}, Theorem \ref{Thm 06} and Corollary \ref{Cor 1}, respectively, are satisfying here, so the corresponding NGAVME has a unique solution. The unique solution of the NGAVEM (\ref{Equ NGAVME}) is
	\begin{equation*}
	X=
	\begin{bmatrix}
		2 &  5 & -1 \\
		3 & -2 & 1  \\
		1 & 1 & 1
	\end{bmatrix}.
\end{equation*}
	
\end{ex}

% % % % % % % % % % % % % % % % % % % % % % % % % % % % % % % % % % % % % % % % % % % % %
% % % % % % % % % % % % % % % % % % % % % % % % % % % % % % % % % % % % % % % % % % % % %
\begin{rem}
 Clearly, Theorem \ref{Thm 1} to  Thoerem \ref{Thm 6}  are the generalization forms of the results in Theorem \ref{thm 1} to Theorem \ref{thm 5} including Theorem \ref{Main Thoerem} respectively. Moreover, the condition of the Theorem \ref{Thm 6} is slighter superior to the (iii) condition of Theorem \ref{Thm 1}, which is easily verified by the Theorem \ref{thm 01}.
\end{rem}

% % % % % % % % % % % % % % % % % % % % % % % % % % % % % % % % % % % % % % % % % % % % %
% % % % % % % % % % % % % % % % % % % % % % % % % % % % % % % % % % % % % % % % % % % % %

Now, our focus on the sufficient condition provided by Wang et al. \cite{wang2021new} for the unique solution of the Sylvester-like AVE (\ref{Equ Sylvester-like AVE}), the statement of Wang et al. \cite{wang2021new} given as:

\begin{thm} \label{thm 8} \cite{wang2021new}
	The Sylvester-like AVE (\ref{Equ Sylvester-like AVE}) has exactly one solution if 
	\begin{equation} \label{SLAVE2}
		\sigma_{min}(LK^{-1})\sigma_{min}(A^{-1}B)>1,
	\end{equation}
	where A and K are non-singular matrices.
\end{thm}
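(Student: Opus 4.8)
The plan is to reduce the Sylvester-like AVE (\ref{Equ Sylvester-like AVE}) to an ordinary GAVE of the form (\ref{Equ GAVE}) by vectorization, and then to try to match the hypothesis (\ref{SLAVE2}) to a sufficient condition already available for the GAVE. First I would apply the $\mathrm{vec}$ operator to $AXK+B\vert X\vert L=F$. Using $\mathrm{vec}(AXK)=(K^{T}\otimes A)\,\mathrm{vec}(X)$ and $\mathrm{vec}(B\vert X\vert L)=(L^{T}\otimes B)\,\vert \mathrm{vec}(X)\vert$ (the entrywise absolute value commutes with $\mathrm{vec}$), the matrix equation becomes the GAVE
\[
\tilde{A}x+\tilde{B}\vert x\vert=\mathrm{vec}(F),\quad \tilde{A}=K^{T}\otimes A,\quad \tilde{B}=L^{T}\otimes B,\quad x=\mathrm{vec}(X).
\]
Since $A$ and $K$ are non-singular, $\tilde{A}$ is non-singular with $\tilde{A}^{-1}=K^{-T}\otimes A^{-1}$, so the whole problem reduces to certifying unique solvability of this $n^{2}\times n^{2}$ GAVE for every right-hand side.

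The second step is to express the stated quantity $\sigma_{min}(LK^{-1})\sigma_{min}(A^{-1}B)$ in terms of $\tilde A$ and $\tilde B$. Computing $\tilde A^{-1}\tilde B=(K^{-T}L^{T})\otimes(A^{-1}B)=(LK^{-1})^{T}\otimes(A^{-1}B)$ and using that the singular values of a Kronecker product $M\otimes N$ are exactly the products $\sigma_i(M)\sigma_j(N)$, together with $\sigma(M^{T})=\sigma(M)$, I would obtain $\sigma_{min}(\tilde A^{-1}\tilde B)=\sigma_{min}(LK^{-1})\,\sigma_{min}(A^{-1}B)$, so that hypothesis (\ref{SLAVE2}) is precisely the statement $\sigma_{min}(\tilde A^{-1}\tilde B)>1$. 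I would then try to convert this into the hypothesis of an available theorem by invoking the reciprocal identity $\sigma_{min}(M)\,\sigma_{max}(M^{-1})=1$ for invertible $M$, which turns $\sigma_{min}(\tilde A^{-1}\tilde B)>1$ into $\sigma_{max}(\tilde B^{-1}\tilde A)<1$; this is a smallness condition on the reformulation $\tilde B^{-1}\tilde A\,x+\vert x\vert=\tilde B^{-1}\mathrm{vec}(F)$, onto which I would aim to transport the mechanism behind the singular-value condition of Theorem \ref{thm 7}.

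The hard part will be exactly this last conversion, and it is where I expect the argument to be most delicate. Theorem \ref{thm 7} (equivalently, condition (iv) of Theorem \ref{thm 1}) controls the $x$-term through $\sigma_{max}(\tilde A^{-1}\tilde B)<1$, whereas hypothesis (\ref{SLAVE2}) amounts to $\sigma_{min}(\tilde A^{-1}\tilde B)>1$, which leaves $\sigma_{max}(\tilde A^{-1}\tilde B)$ entirely unconstrained and instead makes the coefficient of $\vert x\vert$ dominate. The available GAVE criteria — the singular-value conditions of Theorem \ref{thm 1}, the $\mathcal{W}$-property/P-matrix conditions of Theorem \ref{thm 2}, and the spectral-radius condition of Theorem \ref{thm 5} — all require the linear coefficient of $x$ to dominate that of $\vert x\vert$, so none of them is implied by $\sigma_{min}(\tilde A^{-1}\tilde B)>1$. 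I would therefore have to verify directly that the branch matrices $\tilde B^{-1}\tilde A+D$, for diagonal $D=\mathrm{diag}(\pm1)$, remain non-singular with determinants of a single sign, since this is precisely what any valid uniqueness proof must deliver. Tracking this down — and in particular testing the scalar reduction $K=L=I$, $n=1$, where (\ref{SLAVE2}) collapses to $\vert B\vert>\vert A\vert$ on $Ax+B\vert x\vert=f$ — is the crux on which the correctness of the statement, and hence the entire proof, ultimately hinges.
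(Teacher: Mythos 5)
The decisive difficulty you flagged at the end resolves negatively: the statement is false, and the paper does not prove it --- it quotes Theorem \ref{thm 8} from Wang et al.\ precisely in order to refute it. The scalar test you proposed ($K=L=I$, $n=1$, where (\ref{SLAVE2}) collapses to $\vert B\vert>\vert A\vert$) is exactly the paper's counterexample: take $A=1$, $B=2$, $K=L=1$, so that $\sigma_{min}(LK^{-1})\sigma_{min}(A^{-1}B)=2>1$, yet $x+2\vert x\vert=1$ has the two solutions $x=1/3$ and $x=-1$, while $x+2\vert x\vert=-1$ has none (each sign branch yields a contradiction). In your branch-matrix language: with $\tilde A=K^{T}\otimes A$ and $\tilde B=L^{T}\otimes B$, the matrices $\tilde A+\tilde B D$ for $D=\mathrm{diag}(\pm1)$ here have determinants $1+2=3$ and $1-2=-1$ of opposite signs, so unique solvability for every right-hand side fails. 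Your diagnosis in the third paragraph was therefore correct --- when the coefficient of $\vert x\vert$ dominates, none of the GAVE criteria applies, and no criterion can, because uniqueness genuinely breaks. The only gap in your attempt is that you deferred this verification instead of carrying it out, leaving the attempt one line short of the disproof it should have become.

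A further caution about your second paragraph: converting $\sigma_{min}(\tilde A^{-1}\tilde B)>1$ into $\sigma_{max}(\tilde B^{-1}\tilde A)<1$ and hoping to ``transport the mechanism'' of Theorem \ref{thm 7} cannot succeed, because in the reformulation $\tilde B^{-1}\tilde A\,x+\vert x\vert=\tilde B^{-1}\mathrm{vec}(F)$ the dominant coefficient now sits on the nonsmooth term $\vert x\vert$, which is exactly the regime where solutions bifurcate or vanish, as the example shows. The paper's actual fix (its Theorem \ref{Thm 7}) swaps the roles of the matrices rather than the inequality alone: it requires $\sigma_{min}(KL^{-1})\sigma_{min}(B^{-1}A)>1$ with $B$ and $L$ non-singular, which via the identity $\sigma_{max}(M)\,\sigma_{min}(M^{-1})=1$ is an equivalent rewriting of the valid condition $\sigma_{max}(LK^{-1})\sigma_{max}(A^{-1}B)<1$ of Theorem \ref{thm 7}, i.e.\ it restores dominance of the linear term $AXK$ over $B\vert X\vert L$.
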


In general, the result of the Theorem \ref{thm 8} is incorrect. Let consider the example for n=1, the matrix $A=1$, $B=2$, $K=1$ and $L=1$ satisfy the condition (\ref{SLAVE2}) as $\sigma_{min}(LK^{-1})\sigma_{min}(A^{-1}B)=2>1,$ but Sylvester-like AVE $X + 2 \vert X \vert =1$ has a two solutions and $X + 2 \vert X \vert = -1$ has no solution.

Now, we are giving the corrected version of the Theorem \ref{thm 8}.

\begin{thm} \label{Thm 7}
	The Sylvester-like AVE (\ref{Equ Sylvester-like AVE}) has exactly one solution if 
	\begin{equation} \label{SLAVE3}
		\sigma_{min}(KL^{-1})\sigma_{min}(B^{-1}A)>1,
	\end{equation}
	where B and L are non-singular matrices.
\end{thm}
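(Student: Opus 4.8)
The plan is to reduce the Sylvester-like AVE (\ref{Equ Sylvester-like AVE}) to a standard GAVME form by applying the vec operator and exploiting Kronecker product identities, then invoke the spectral radius condition that underlies Theorem \ref{thm 7}. First I would apply the vectorization identity $\mathrm{vec}(AXK) = (K^{\top} \otimes A)\,\mathrm{vec}(X)$ to both matrix terms. Writing $x = \mathrm{vec}(X)$, $f = \mathrm{vec}(F)$, the equation $AXK + B\vert X \vert L = F$ becomes $(K^{\top} \otimes A)\,x + (L^{\top} \otimes B)\,\vert x \vert = f$, since $\vert \mathrm{vec}(X)\vert = \mathrm{vec}(\vert X \vert)$ componentwise. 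This is exactly a GAVE of the form $\tilde A x + \tilde B \vert x \vert = f$ with $\tilde A = K^{\top} \otimes A$ and $\tilde B = L^{\top} \otimes B$.

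Next I would note that by Theorem \ref{thm 7}, this GAVE has a unique solution provided $\sigma_{max}(\tilde A^{-1}\tilde B) < 1$, where the result applies with the roles taken by the Kronecker-lifted matrices; the relevant singular-value condition from the underlying theory reads $\sigma_{max}(LK^{-1})\,\sigma_{max}(A^{-1}B) < 1$ once we account for $\tilde A^{-1}\tilde B = (K^{-\top}\otimes A^{-1})(L^{\top}\otimes B) = (K^{-\top}L^{\top})\otimes(A^{-1}B) = (LK^{-1})^{\top}\otimes(A^{-1}B)$, and the maximum singular value of a Kronecker product is the product of the maximum singular values. The task is then to rewrite the claimed condition (\ref{SLAVE3}) so that it matches this sufficient condition. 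Using the identity $\sigma_{max}(M) = 1/\sigma_{min}(M^{-1})$ valid for any invertible matrix $M$, I would convert $\sigma_{max}(LK^{-1}) = 1/\sigma_{min}(KL^{-1})$ and $\sigma_{max}(A^{-1}B) = 1/\sigma_{min}(B^{-1}A)$, both legitimate since $B$ and $L$ are assumed invertible here. Substituting, the condition $\sigma_{max}(LK^{-1})\sigma_{max}(A^{-1}B) < 1$ becomes precisely $\sigma_{min}(KL^{-1})\sigma_{min}(B^{-1}A) > 1$, which is (\ref{SLAVE3}).

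Thus the proof proceeds by: (i) vectorizing to pass to the equivalent GAVE; (ii) identifying the Kronecker structure of $\tilde A^{-1}\tilde B$ and applying the multiplicativity of $\sigma_{max}$ over Kronecker products; and (iii) translating the correct singular-value inequality into the stated form via the reciprocal relation $\sigma_{max}(M)\sigma_{min}(M^{-1}) = 1$. The main obstacle I anticipate is verifying that the reindexing in step (iii) genuinely corrects the error in Theorem \ref{thm 8}: the flawed statement used $\sigma_{min}(LK^{-1})\sigma_{min}(A^{-1}B) > 1$, which does not correspond to $\sigma_{max}(LK^{-1})\sigma_{max}(A^{-1}B) < 1$ under the reciprocal identity, whereas the corrected pairing $KL^{-1}$ with $B^{-1}A$ does. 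I would be careful to confirm that each invertibility hypothesis ($A,K$ nonsingular for the original singular-value bound, and $B,L$ nonsingular for the reciprocal step) is simultaneously available, so that all four reciprocal conversions are valid and the equivalence is exact rather than merely sufficient in one direction.
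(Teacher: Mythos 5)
Your proposal is correct and its decisive step is exactly the paper's proof: take the Wang et al.\ sufficient condition $\sigma_{max}(LK^{-1})\sigma_{max}(A^{-1}B)<1$ from Theorem \ref{thm 7} and rewrite it via $\sigma_{max}(M)\sigma_{min}(M^{-1})=1$ into the form (\ref{SLAVE3}). The only difference is that you also re-derive Theorem \ref{thm 7} itself by vectorization and the Kronecker-product identity $\tilde A^{-1}\tilde B=(LK^{-1})^{\top}\otimes(A^{-1}B)$, whereas the paper simply cites it; that extra derivation is sound but not needed for the argument.
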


\begin{proof}
	Let $A_{1}= LK^{-1}$ and $A_{2}=A^{-1}B$, then by the simple relation $\sigma_{max}(B_1)\sigma_{min}(B_1^{-1})=1$ for non-singular matrix $B_1$, applying to Theorem \ref{thm 7}, we get our result.
\end{proof}

To the validation of Theorem \ref{Thm 7}, let consider the Example 1 and Example 2 from \cite{wang2021new}, by the simple calculation we get $\sigma_{min}(KL^{-1})\sigma_{min}(B^{-1}A)=1.3334>1,$ and $\sigma_{min}(KL^{-1})\sigma_{min}(B^{-1}A)=1.0153>1$ for Example 1 and Example 2, respectively. So our result is valid for examining the unique solution of Sylvester-like AVE (\ref{Equ Sylvester-like AVE}).
% % % % % % % % % % % % % % % % % % % % % % % % % % % % % % % % % % % % % % % % % % % % %
% % % % % % % % % % % % % % % % % % % % % % % % % % % % % % % % % % % % % % % % % % % % %
% % % % % % % % % % % % % % % % % % % % % % % % % % % % % % % % % % % % % % % % % % % % %
% % % % % % % % % % % % % % % % % % % % % % % % % % % % % % % % % % % % % % % % % % % % %

\section{Conclusions} \label{section 4}

This research presented a spectral radius condition for the unique solvability of the GAVME. By utilizing the results of GAVME, some results were obtained to ensure the unique solvability of the NGAVME. The published works in \cite{dehghan2020matrix,kumar2022note,kumar2023note,tang2022solvability,xie2021unique} are generalized here. \\
% The numerical results for the GAVME, NGAVME and Sylvester-like AVE are also exciting topics in the future.\\

% % % % % % % % % % % % % % % % % % % % % % % % % % % % % % % % % % % % % % % % % % % % %
% % % % % % % % % % % % % % % % % % % % % % % % % % % % % % % % % % % % % % % % % % % % %
% % % % % % % % % % % % % % % % % % % % % % % % % % % % % % % % % % % % % % % % % % % % % 
%{\bf Statements and Declarations.}

%{\bf Competing Interests.} The authors declare that there is not any conflict of interests regarding the publication of this manuscript.

%{\bf Data availability statements.}
%Data sharing not applicable to this article as no data sets were generated or analyzed during the current study.

%{\bf Acknowledgment.} This research work of the first author is supported by the Ministry of Education (Government of India) through GATE fellowship registration No. MA19S43033021.

% % % % % % % % % % % % % % % % % % % % % % % % % % % % % % % % % % % % % % % % % % % % %
% % % % % % % % % % % % % % % % % % % % % % % % % % % % % % % % % % % % % % % % % % % % %
% % % % % % % % % % % % % % % % % % % % % % % % % % % % % % % % % % % % % % % % % % % % %

% % % % % % % % % % % % % % % % % % % % % % % % % % % % % % % % % % % % % % % % % % % % %
% % % % % % % % % % % % % % % % % % % % % % % % % % % % % % % % % % % % % % % % % % % % %
% % % % % % % % % % % % % % % % % % % % % % % % % % % % % % % % % % % % % % % % % % % % %

%\nocite{*}
\label{Bibliography}

%\lhead{\emph{Bibliography}} % Change the page header to say "Bibliography"

\bibliographystyle{unsrt} % Use the "custom" BibTeX style for formatting the Bibliography

\bibliography{BibliographyAVME} % The references (bibliography) information are stored in the file named "Bibliography.bib"
%\lhead{\emph{Publications}}
%\include{Publications}
%\include{appendices}
\end{document}